\newcommand\cA{{\mathcal A}}
\newcommand\cB{{\mathcal B}}
\newcommand\cC{{\mathcal C}}
\newcommand\cF{{\mathcal F}}
\newcommand\ex{\ensuremath{\mathrm{ex}}}
\newcommand\cex{\ensuremath{\mathrm{ex}^{\mathrm{col}}}}
\newcommand\cG{{\mathcal G}}
\newcommand\cH{{\mathcal H}}
\newcommand\cN{{\mathcal N}}
\newcommand\cP{{\mathcal P}}
\newcommand\cT{{\mathcal T}}
\theoremstyle{plain}
\newtheorem{theorem}{Theorem}[section]
\newtheorem{lemma}[theorem]{Lemma}
\newtheorem{corollary}[theorem]{Corollary}
\newtheorem{conjecture}[theorem]{Conjecture}
\newtheorem{proposition}[theorem]{Proposition}
\theoremstyle{definition}
\newtheorem{claim}[theorem]{Claim}
\newtheorem*{lemma*}{Lemma}
\newtheorem*{thm*}{Theorem}
\newcommand\cref[1]{Corollary~\ref{cor:#1}}
\title{Counting multiple graphs in generalized Tur\'an problems}
\author{D\'aniel Gerbner}
\begin{document}

\maketitle

\begin{abstract}
    We are given graphs $H_1,\dots,H_k$ and $F$. Consider an $F$-free graph $G$ on $n$ vertices. What is the largest sum of the number of copies of $H_i$? The case $k=1$ has attracted a lot of attention.
 We also consider a colored variant, where the edges of $G$ are colored with $k$ colors. What is the largest sum of the number of copies of $H_i$ in color $i$? 
    
    Our motivation to study this colored variant is a recent result stating that the Tur\'an number of the $r$-uniform Berge-$F$ hypergraphs is at most the quantity defined above for $k=2$, $H_1=K_r$ and $H_2=K_2$. 
    
    In addition to studying these new questions, we obtain  new results for generalized Tur\'an problems and also for Berge hypergraphs.
\end{abstract}

\section{Introduction}

For graph $H$ and $G$, let $\cN(H,G)$ denote the number of subgraphs of $G$ that are isomorphic to $H$. Let $\ex(n,H,F)$ denote the largest $\cN(H,G)$ among $F$-free graphs $G$ on $n$ vertices. In the case $H=K_2$, the Tur\'an number $\ex(n,F):=\ex(n,K_2,F)$ is one of the most studied parameters in extremal graph theory. The systematic study of the general version has been initiated by Alon and Shikhelman \cite{as}, after several sporadic results.

In this paper we study a variant, where instead of counting copies of a subgraph $H$, we count copies of several different subgraphs.
Let $H_1,\dots, H_k$ and $G$ be graphs.  Let $\cN(H_1,\dots, H_k;G)=\sum_{i=1}^k\cN(H_i,G)$ and let $\ex(n,(H_1,\dots, H_k),F)$ denote the largest value of $\cN(H_1,\dots, H_k;G)$ if $G$ is an $F$-free graph on $n$ vertices. 

The first thing to observe is that if for every $i$ the same graph $G$ maximizes $\cN(H_i,G)$ among $n$-vertex $F$-free graphs, then we are done, the sum is also maximized by that graph. This is the case for cliques. The Tur\'an graph $T_r(n)$ is a complete $r$-partite graph where each part has size $\lfloor n/r\rfloor$ or $\lceil n/r\rceil$. Tur\'an \cite{T} showed that $\ex(n,K_k)=|E(T_{k-1}(n))|$ and Zykov \cite{zykov} showed that $\ex(n,K_r,K_{\ell})=\cN(K_r,T_{\ell-1}(n))$. This implies $\ex(n,(H_1,\dots, H_k),K_\ell)=\cN(H_1,\dots, H_k;T_{\ell-1}(n))$ in the case each $H_i$ is a clique.

One could also consider a weighted version, where we are also given $\alpha_1,\dots,\alpha_k$ and we want to maximize $\sum_{i=1}^k\alpha_i\cN(H_i,G)$. This was studied by Bollob\'as \cite{boll}, who showed that in the case each $H_i$ is a clique, then a complete multipartite graph gives the maximum, but not necessarily the Tur\'an graph (note that $\alpha_i<0$ is possible). He proved it for $k=2$, but the proof easily extends for larger $k$. Schelp and Thomason \cite{scth} extended it to induced copies of complete multipartite graphs $H_i$, in the case for every $i$ either $\alpha_i\ge 0$, or $H_i$ is a complete graph. They remark that we can assume that the clique number is bounded, i.e. some $K_r$ is forbidden. Note that we deal with not necessarily induced copies of $H_i$. However, all complete $(\ell-1)$-partite subgraphs and all cliques are induced in a $K_\ell$-free graph, thus we have that if each $H_i$ is a complete $(\ell-1)$-partite graph or a clique, then $\ex(n,(H_1,\dots, H_k),K_\ell)=\cN(H_1,\dots, H_k;G)$ for some complete $(\ell-1)$-partite graph $G$. For any particular integer $\ell$ and graphs $H_i$, a straightforward optimization would find the extremal graph, but we cannot handle it in this generality.

Other results that fit into this setting are when we count structures that correspond to multiple subgraphs, for example walks. Another example is the second Zagreb index of a graph $\sum_{uv\in E(G)} d(u)d(v)$ (see \cite{bdfg} for a survey), which is equal to $\cN(P_4,G)+3\cN(K_3,G)+\cN(P_3,G)+2\cN(K_2,G)$. We denote by $P_\ell$ the path on $\ell$ vertices and by $S_\ell$ the star on $\ell$ vertices. To observe this equality, we pick an edge $uv$ and pick one of the neighbors $u'$ of $u$ and one of the neighbors $v'$ of $v$. If $u$ and $u'$ are distinct from each other and from $u$ and $v$, we obtain a $P_4$. If $u'=v'$, we obtain a triangle, and each edge is counted three times this way. If $u'=v$ and $v'=u$, we obtain an edge, and each edge is counted twice this way. In the remaining case either $u'=v$ and $v'\neq u$ or $v'=u$ and $u'\neq v$, thus we obtain two copies of $P_3$, but each $P_3$ is counted twice altogether. 

Another example is the problem of studying $\sum_{i=1}^n d_i^r$ in $F$-free graphs with degree sequence $d_1,\dots, d_n$. This was initiated by Caro and Yuster \cite{cy} and shown to be equal to a weighted sum of the number of stars in \cite{ge5}. Another motivation comes from \cite{gmv,ge4} where for some graphs $H$ and $F$, bounds on $\ex(n,H,F)$ were given by using a (weighted) sum of the number of copies of subgraphs of $H$. 

In what follows, we do not deal with weights (with the exception of one remark). Many of our proofs immediately extend to a weighted version, but we feel the most important effect of adding weights would be the even more complicated notation. Similarly, we avoid forbidding multiple graphs at the same time, just for the sake of simplicity.

Let us mention that there are results on forbidding a subgraph and counting multiple subgraphs where all graphs belonging to an infinite family are counted, for example all the cycles, see \cite{mrs} and the references in it.

\smallskip

We also consider a colored variant. Our main motivation to study this variant is its connection to Berge hypergraphs, that we will describe later.
Let $G$ be a graph with edges colored by $1,\dots,k$. Then we denote by $G_i$ the subgraph of $G$ having the edges of color $i$. Let $\cN^{{\mathrm col}}(H_1,\dots, H_k;G)=\sum_{i=1}^k\cN(H_i,G_i)$.
Let $\cex(n,(H_1,\dots, H_k),F)$ denote the largest $\cN^{{\mathrm col}}(H_1,\dots, H_k;G)$ if $G$ is a $k$-colored $F$-free graph on $n$ vertices. In the case $k=2$, we call the first color blue and the second color red. 

We will refer to $\cex(n,(H_1,\dots, H_k),F)$ as the colored variant, and $\ex(n,(H_1,\dots, H_k),F)$ as the uncolored variant.
Let us start with some simple observations.
\begin{proposition}\label{obse} For any $i$ we have $$\ex(n,H_i,F)\le \cex(n,(H_1,\dots, H_k),F)\le\ex(n,(H_1,\dots, H_k),F)\le\sum_{i=1}^k \ex(n,H_i,F).$$

\end{proposition}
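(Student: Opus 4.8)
The plan is to verify the three inequalities in the chain one at a time, in the order written; each follows from a one-line construction or from monotonicity of subgraph counts, so I do not expect any step to be genuinely hard.

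For the leftmost inequality $\ex(n,H_i,F)\le \cex(n,(H_1,\dots,H_k),F)$, I would start from an $F$-free graph $G$ on $n$ vertices with $\cN(H_i,G)=\ex(n,H_i,F)$ and color every edge of $G$ with color $i$. Then $G_i=G$ while $G_j$ has no edges for $j\neq i$, so $\cN^{{\mathrm col}}(H_1,\dots,H_k;G)=\cN(H_i,G)+\sum_{j\neq i}\cN(H_j,G_j)\ge \cN(H_i,G)=\ex(n,H_i,F)$. Since this edge-colored graph is $F$-free on $n$ vertices, it witnesses the desired bound.

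For the middle inequality $\cex(n,(H_1,\dots,H_k),F)\le\ex(n,(H_1,\dots,H_k),F)$, take an edge-colored $F$-free graph $G$ on $n$ vertices attaining $\cex$ and forget the colors; the resulting uncolored graph is still $F$-free on $n$ vertices. For each $i$ the color class $G_i$ is a subgraph of $G$, hence $\cN(H_i,G_i)\le \cN(H_i,G)$, and summing over $i$ gives $\cN^{{\mathrm col}}(H_1,\dots,H_k;G)\le \cN(H_1,\dots,H_k;G)\le \ex(n,(H_1,\dots,H_k),F)$. For the rightmost inequality, take an $F$-free graph $G$ on $n$ vertices attaining $\ex(n,(H_1,\dots,H_k),F)$; since $G$ is itself an $F$-free $n$-vertex graph, $\cN(H_i,G)\le \ex(n,H_i,F)$ for each $i$, and summing over $i$ yields $\ex(n,(H_1,\dots,H_k),F)=\sum_i \cN(H_i,G)\le \sum_i \ex(n,H_i,F)$.

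The only place meriting a word of care is the first step: if some $H_j$ has no edges it may still have a positive number of copies in the edgeless graph $G_j$, but this only increases $\cN^{{\mathrm col}}$, so the inequality is unaffected. Likewise the monotonicity $\cN(H_i,G')\le\cN(H_i,G)$ for $G'\subseteq G$ used in the middle step is immediate from the definition of $\cN$. So there is really no obstacle here; the proposition is a bookkeeping statement relating the colored and uncolored parameters, and the argument above makes each inequality explicit.
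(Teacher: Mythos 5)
Your proof is correct and is the standard bookkeeping argument; the paper states this proposition without proof, labeling it a simple observation, and your three-step verification (monochromatic coloring for the left inequality, color-forgetting plus monotonicity of $\cN$ for the middle, and termwise bounding for the right) is exactly the argument one would supply.
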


\begin{corollary} We have $\ex(n,(H_1,\dots, H_k),F)=\Theta(\max_{i\le k} \ex(n,H_i,F))$ and similarly $\cex(n,(H_1,\dots, H_k),F)=\Theta(\max_{i\le k} \ex(n,H_i,F))$.

\end{corollary}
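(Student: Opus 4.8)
The plan is to verify the four-term chain one inequality at a time; each follows straight from the definitions, so the work is purely bookkeeping rather than anything substantive.

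For the leftmost inequality $\ex(n,H_i,F)\le\cex(n,(H_1,\dots,H_k),F)$, I would take an $F$-free graph $G$ on $n$ vertices with $\cN(H_i,G)=\ex(n,H_i,F)$ and colour every edge of $G$ with colour $i$. Then $G_i=G$ and the remaining terms $\cN(H_j,G_j)$ are nonnegative, so $\cN^{{\mathrm col}}(H_1,\dots,H_k;G)\ge\cN(H_i,G_i)=\ex(n,H_i,F)$; since this coloured $F$-free graph is admissible for the colored parameter, the inequality follows. For the middle inequality, let $G$ be an $F$-free graph on $n$ vertices carrying an edge-colouring that attains $\cex(n,(H_1,\dots,H_k),F)$. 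Each $G_i$ is a subgraph of $G$, so every copy of $H_i$ in $G_i$ is also a copy of $H_i$ in $G$, giving $\cN(H_i,G_i)\le\cN(H_i,G)$; summing over $i$ yields $\cN^{{\mathrm col}}(H_1,\dots,H_k;G)\le\cN(H_1,\dots,H_k;G)\le\ex(n,(H_1,\dots,H_k),F)$, where the last step uses that $G$ itself is admissible for the uncoloured parameter. Finally, for the rightmost inequality, take an $F$-free graph $G$ on $n$ vertices with $\cN(H_1,\dots,H_k;G)=\ex(n,(H_1,\dots,H_k),F)$; for each $i$ this $G$ is $F$-free on $n$ vertices, so $\cN(H_i,G)\le\ex(n,H_i,F)$, and summing over $i$ gives the bound.

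I expect no real obstacle: the proposition is a warm-up assembling the trivial comparisons between the coloured and uncoloured variants and the individual generalized Turán numbers. The only point worth a word of care is the leftmost inequality when some $H_j$ is edgeless (so $\cN(H_j,G_j)$ need not be $0$), but the argument there uses only nonnegativity of those terms, so it is unaffected.
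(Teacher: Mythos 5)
Your proof is correct, and it is exactly the route the paper intends: these are stated as "simple observations" without proof, so there is no competing argument to compare against. One small remark on presentation: what you have written is really a proof of Proposition~\ref{obse} (the four-term chain); to close out the Corollary you should add the one-line final step that, because $k$ is a fixed constant, $\max_{i\le k}\ex(n,H_i,F)\le\sum_{i=1}^k\ex(n,H_i,F)\le k\max_{i\le k}\ex(n,H_i,F)$, so both $\cex(n,(H_1,\dots,H_k),F)$ and $\ex(n,(H_1,\dots,H_k),F)$ are sandwiched within a constant factor of $\max_{i\le k}\ex(n,H_i,F)$, giving the two $\Theta$ statements. Your caveat about edgeless $H_j$ is a sensible catch and handled correctly.
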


Thus, if we know $\ex(n,H_i,F)$ for every $i$, then we know the order of magnitude of $\ex(n,(H_1,\dots, H_k),F)$ and $\cex(n,(H_1,\dots, H_k),F)$. We conjecture that in the colored variant, the same lower bound is asymptotically sharp.

\begin{conjecture}\label{mainconj}
$$\cex(n,(H_1,\dots, H_k),F)=(1+o(1))\max_{i\le k} \ex(n,H_i,F).$$
\end{conjecture}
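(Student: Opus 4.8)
The lower bound $\cex(n,(H_1,\dots,H_k),F)\ge\max_{i\le k}\ex(n,H_i,F)$ is exactly Proposition~\ref{obse} (colour all edges of an extremal graph for $\ex(n,H_i,F)$ with colour $i$), so the content of the conjecture is the matching upper bound. Write $\mu=\mu(n):=\max_{i\le k}\ex(n,H_i,F)$ and fix an $n$-vertex $F$-free graph $G$ whose edges are coloured by $1,\dots,k$. Each $G_i$ is a subgraph of $G$, hence $F$-free, so $\cN(H_i,G_i)\le\ex(n,H_i,F)\le\mu$; summing gives the crude bound $\cN^{{\mathrm col}}(H_1,\dots,H_k;G)\le k\mu$, and the whole task is to shrink the constant $k$ to $1+o(1)$. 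Two cases where this is transparent and which I would record first are: (i) if $H_1=\dots=H_k=H$ with $e(H)\ge 1$, then any copy of $H$ in $G$ is monochromatic in at most one colour, so $\sum_i\cN(H,G_i)\le\cN(H,G)\le\ex(n,H,F)$ and the conjecture holds with equality; (ii) if every $H_i$ is a clique — or, more generally, if a single $F$-free graph simultaneously maximizes $\cN(H_i,\cdot)$ for all $i$, as in the discussion after Zykov's theorem in the introduction — then colouring that graph with the colour attaining $\mu$ is optimal and the conjecture again holds on the nose.

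For the general case the plan is a dichotomy exploiting that $G_1,\dots,G_k$ are pairwise edge-disjoint, so the colours must compete for the bounded edge budget $|E(G)|\le\ex(n,F)$ and for the structure of $G$. Call colour $i$ \emph{heavy} if $\cN(H_i,G_i)\ge\eps\mu$ and \emph{light} otherwise; the light colours contribute at most $k\eps\mu$ in total, so after letting $\eps\to0$ it is enough to bound the heavy colours by $(1+o(1))\mu$. The hoped-for input for a heavy colour is a stability/supersaturation statement: that carrying a positive fraction of $\ex(n,H_i,F)$ copies of $H_i$ inside an $F$-free graph forces $G_i$ to contain an almost-extremal configuration for $\ex(n,H_i,F)$, and hence to occupy a positive fraction of the vertices and edges of the host. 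Edge-disjointness then makes the heavy colours share the structure of $G$; optimizing how the common resources are split among them — this is where the convexity of the generalized Tur\'an function, implicit already in the clique case, should force the ``all in one colour'' assignment to win — yields that the heavy colours' copy-counts add up to at most $(1+o(1))\max_{i\le k}\ex(n,H_i,F)$.

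The step I expect to be the real obstacle is this stability input. No such statement is known for arbitrary pairs $(H_i,F)$, and in its naive form it is false when $H_i$ is sparse: an $F$-free graph can carry close to the extremal number of copies of a star or a matching while being structurally far from any single extremal graph, and several edge-disjoint near-extremal such graphs can coexist inside one host. I would therefore expect an unconditional proof only in families where the relevant generalized Tur\'an numbers and their extremal structures are sufficiently understood — all $H_i$ cliques, ``degenerate'' $H_i$ with a known exponent, $k=2$ with $K_2$ among the $H_i$, and the like — with the statement in full generality remaining a conjecture whose resolution is essentially as hard as pinning down the individual generalized Tur\'an numbers together with their stability versions.
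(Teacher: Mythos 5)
This statement is an open conjecture in the paper; no proof is given, and you correctly conclude at the end that your discussion does not amount to a proof of the general case. Your survey of what can be said — the lower bound from Proposition~\ref{obse}, the trivial factor-$k$ upper bound, the heavy/light dichotomy, the role of a stability input, and the honest flagging of that input as the missing ingredient — is a reasonable account of the difficulty, and matches the paper's perspective (the paper spends Section~2 developing exactly this stability route in the special cases where it works).

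There is, however, a genuine error in your case~(ii). You assert that if a single $F$-free graph simultaneously maximizes $\cN(H_i,\cdot)$ for all $i$, then colouring it monochromatically in the heaviest colour is optimal, so the conjecture ``holds on the nose.'' The paper explicitly refutes this at the start of Section~3: for $F=S_7$ and $n=6p+2$, the same graph (disjoint copies of $K_6$ plus one isolated edge) is extremal for both $\ex(n,K_3,F)$ and $\ex(n,K_2,F)$, yet the optimal colouring is \emph{not} monochromatic — it colours the $K_6$'s blue and the leftover edge red, giving $\cex(n,(K_3,K_2),S_7)=\max_i\ex(n,H_i,S_7)+1$. So ``a common extremal graph'' does not give exact colour-resistance; it is only Theorem~3.1 (all $H_i$ cliques \emph{and} $F=K_m$) that does, and that requires the full Zykov-style symmetrization, it is not ``transparent.'' Even your all-$H_i$-cliques sub-case fails in the claimed generality: for $F=S_\ell$ the paper's counterexample has every $H_i$ a clique. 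What \emph{is} transparent is your case~(i), $H_1=\dots=H_k=H$ with $e(H)\ge 1$, since edge-disjointness of the $G_i$ makes the monochromatic copies disjoint; but that special case contributes nothing to the conjecture because there the maximum and the sum coincide by definition. The structural heavy/light plan is plausible but, as you say, hinges on a stability statement for generalized Tur\'an numbers that is currently available only in a handful of cases (Ma--Qiu for cliques vs.\ chromatic threshold, Gerbner--Palmer for $(P_4,C_5)$, and Lemma~\ref{stabi} of this paper); so the conjecture remains open, and your write-up should present itself as evidence and partial progress rather than a proof attempt.
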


Observe that if the order of magnitude of $\ex(n,H_j,F)$ is larger than the order of magnitude of $\ex(n,H_{\ell},F)$ for any $\ell\neq j$, then the asymptotic result immediately follows using Proposition \ref{obse}, even in the uncolored variant. We will see an example in Section 4 showing that the analogue of Conjecture~\ref{mainconj} does not hold in the uncolored case.

In light of the above observations, the interesting results are asymptotic ones in the few cases they are non-trivial, and exact results.
We say that the $(k+1)$-tuple of graphs $(H_1,\dots,H_k,F)$ is \textit{color-resistant} for an integer $n$ if $\cex(n,(H_1,\dots, H_k),F)=\max_{i\le k} \ex(n,H_i,F)$, i.e. a monochromatic graph attains the maximum.

Note that $\ex(n,(H_1,\dots, H_k),F)$ can be equal to $\max_{i\le k} \ex(n,H_i,F)$ only if the extremal $n$-vertex $F$-free graph $G$ contains no copies of $H_j$ with $j\neq i$ (we will see several examples of this later).
What happens more often is that $G$ also gives the maximum for our problem, i.e., $\max_{i\le k} \ex(n,H_i,F)=\ex(n,H_j,F)=\cN(H_j,G)$ and also  $\ex(n,(H_1,\dots, H_k),F)=\cN(H_1,\dots,H_k;G)$. In this case we say that the $(k+1)$-tuple of graphs $(H_1,\dots,H_k,F)$ is \textit{resistant}.

\smallskip

Our main motivation to study the colored variant is its application in the theory of Berge hypergraphs. We say that a hypergraph $\cH$ is a Berge copy of a graph $F$ (in short: $\cH$ is a Berge-$F$) if $V(F)\subset V(\cH)$ and there is a bijection $f:E(F)\rightarrow E(\cH)$ such that for any $e\in E(F)$ we have $e\subset f(e)$. This definition was introduced by Gerbner and Palmer \cite{gp1}, extending the well-established notion of Berge cycles and paths. The largest number of hyperedges in an $r$-uniform Berge-$F$-free hypergraph is denoted by $\ex_r(n,\textup{Berge-}F)$, see Chapter 5.2.2 of \cite{gp} for a short survey on this function.

Gerbner and Palmer \cite{gp2} connected Berge hypergraphs and generalized Tur\'an problems by showing $\ex(n,K_r,F)\le \ex_r(n,\textup{Berge-}F)\le \ex(n,K_r,F)+\ex(n,F)$. This was improved by F\"uredi, Kostochka and Luo \cite{fkl} and independently by Gerbner, Methuku and Palmer \cite{gmp} to $\ex_r(n,\textup{Berge-}F)\le \cex(n,(K_r,K_2),F)$.

Let us briefly state their results in our setting. F\"uredi, Kostochka and Luo \cite{fkl} gave an upper bound on $\cex(n,(K_r,K_2),\cC_{\ell})$, where $\cC_\ell$ denotes the set of cycles of length at least $\ell$. This upper bound is sharp in the case $\ell-2$ divides $n-1$ and $\ell\ge r+3$. In \cite{fkl2} they determined $\cex(n,(K_r,K_2),\cC_{\ell})$ for every $n$ if $\ell\ge r+4$. Gerbner, Methuku and Palmer \cite{gmp} showed $(K_r,K_2,K_m)$ is color-resistant for any $n$, $r$ and $m$. They also showed that for any graph $F$, if $F'$ is obtained by deleting a vertex of $F$, and $f(n)$ is such that $\ex(n,K_{r-1},F')\le f(n)n$ for every $n$, then $\cex(n,(K_r,K_2),F)\le \max\{2f(n)/r,1\}\ex(n,F)$. With this, they gave bounds on the Tur\'an number of several different Berge hypergraphs; those results also extend to our setting. 

In particular the proofs in \cite{gmp} imply $\cex(n,(K_r,K_2),T)\le \frac{n}{k}\binom{k}{r}$ for every tree $T$ on $k+1$ vertices if $k>r+1>3$, assuming the Erd\H os-S\'os conjecture on the Tur\'an number of trees holds for every subtree of $T$. Note that this is known for some classes of trees, for example if $T$ is a path \cite{eg}, spider \cite{fhl} or has diameter at most four \cite{mcl}. Also $\cex(n,(K_3,K_2),K_{2,t})=(1+o(1))(t-1)^{3/2}n^{3/2}/6$ and $\cex(n,(K_3,K_2),C_{2k})\le (2k-3)\ex(n,C_{2k})/3$ is implied for $t\ge 4$ and $k\ge3$ from the proofs in \cite{gmp}, along with other bounds.

\smallskip

Let us describe the structure of the paper. We also state here our new results concerning generalized Tur\'an problems and Berge hypergraphs, as we believe they may be of interest to more readers.
In Section 2, we describe how stability results concerning generalized Tur\'an numbers can be used in our setting. We apply the few known results and prove a new one. 

We say that an edge $uv$ of a graph $G$ is a color-critical edge if deleting it from $G$ decreases its chromatic number. An $m$-chromatic graph $F$ with a color-critical edge often behaves similarly to $K_m$ in extremal problems. In particular, Simonovits \cite{miki} showed that for $n$ large enough, the Tur\'an graph $T_{m-1}(n)$ contains the most edges among $F$-free graphs, and it was extended by Ma and Qiu \cite{mq}, who showed that $T_{m-1}(n)$ also contains the most copies of $K_r$ for $r<m$. We prove that there is also stability here.

\begin{lemma}\label{stabi} Let $F$ be an $(m+1)$-chromatic graph with a color-critical edge and $r<m+1$. If $G$ is an $n$-vertex $F$-free graph with chromatic number more than $m$, then $\ex(n,K_r,F)-\cN(K_r,G)=\Omega(n^{r-1})$.
\end{lemma}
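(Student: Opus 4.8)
The plan is to combine the Ma--Qiu result (which gives $\ex(n,K_r,F) = \cN(K_r, T_m(n))$ for $n$ large, since $F$ is $(m+1)$-chromatic with a color-critical edge) with the classical Simonovits stability method, upgraded to count copies of $K_r$ rather than edges. First I would invoke the progressive induction / stability theory of Simonovits: an $n$-vertex $F$-free graph $G$ that is "far" from $T_m(n)$ in edit distance has $\Omega(n^2)$ fewer edges than $T_m(n)$; conversely, if $G$ is $F$-free and has at least $|E(T_m(n))| - o(n^2)$ edges, then $G$ can be obtained from $T_m(n)$ by adding and deleting $o(n^2)$ edges. So the argument naturally splits into two regimes according to how close $G$ is to the Tur\'an graph.

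In the first regime, suppose $G$ differs from every complete $m$-partite graph in at least $\delta n^2$ edges for a small constant $\delta$. Then by Simonovits stability $|E(G)| \le |E(T_m(n))| - c n^2$ for some $c = c(\delta, F) > 0$. I would then use a supersaturation / convexity argument to convert this edge deficit into a deficit of $\Omega(n^r)$ in the number of $K_r$'s: a standard Kruskal--Katona or Lagrangian-type estimate shows that an $n$-vertex graph with $|E(T_m(n))| - cn^2$ edges contains at most $\cN(K_r,T_m(n)) - c' n^r$ copies of $K_r$. (Concretely, one can delete $\Theta(n^2)$ edges from $G$ to reach a graph with the Tur\'an graph's edge count and note each deleted edge destroys $O(n^{r-2})$ copies of $K_r$ but, by an averaging argument over vertices of high codegree, the total loss is already forced to be $\Omega(n^r)$; alternatively use the Moon--Moser / Zykov bound that among graphs with a given number of edges the number of $K_r$'s is maximized by a complete multipartite graph.) This is comfortably $\Omega(n^{r-1})$, in fact much more.

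In the second regime, $G$ is $\delta n^2$-close to some complete $m$-partite graph, hence to $T_m(n)$ after rebalancing (moving a vertex between parts changes edge and $K_r$ counts by $O(n)$ and $O(n^{r-1})$ respectively, and $T_m(n)$ is the balanced optimum). Write $G$ as $T_m(n)$ with a set $A$ of added edges and a set $D$ of deleted edges, $|A|+|D| = o(n^2)$. Crucially, $G$ has chromatic number more than $m$ — this is the hypothesis we have not yet used — so $G$ is \emph{not} $m$-colorable, which forces $A \neq \emptyset$ in a strong sense: after any $o(n^2)$ modifications one could try to make $G$ $m$-partite by deleting the few added edges, but non-$m$-colorability means that within the nearly-$m$-partite structure there must be an edge (an added edge inside some part $V_j$) that survives, or more precisely an odd structure preventing $m$-coloring. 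The key point is that $A$ contains at least one edge $uv$ with both endpoints in the same part $V_j$ of the partition witnessing closeness; otherwise, deleting $D$ and recoloring would give a proper $m$-coloring, contradicting $\chi(G) > m$. Such an edge $uv$ together with a transversal of $(m-2)$ further vertices, one from each of $m-2$ of the remaining parts, all of which are adjacent to both $u$ and $v$ except for at most $|D| = o(n^2)$ spoiled choices — wait, we need adjacency, so we count: the number of $(r-2)$-sets forming a $K_r$ with $\{u,v\}$ using only "clean" vertices is $\Omega(n^{r-2})$, provided $r - 2 \le m - 2$, i.e. $r \le m$, which holds. But the edge $uv$ lies inside a part of $T_m(n)$, so these $K_r$'s are \emph{extra} copies not present in $T_m(n)$; they do not help us. Instead, the point is the reverse: I would argue that the presence of the forced in-part edge, combined with the near-optimality of the partition, means $G$ must be missing $\Omega(n^{r-1})$ copies of $K_r$ compared to $T_m(n)$. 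Here is the mechanism: Zykov's theorem says $\cN(K_r, T_m(n))$ is the unique maximum among all $n$-vertex $(m+1)$-clique-free graphs, and in fact among all $m$-partite graphs the balanced one is the strict maximizer with a quadratic (in the imbalance) gap for edges, translating to an $\Omega(n^{r-1})$ gap per unit of vertex imbalance for $K_r$'s. Since $G$ has an in-part edge $uv$, the two endpoints $u,v$ cannot both be "fully connected to all other parts" and still keep $\chi(G) = m$; but more simply, I would use a local variant: the graph $G - uv$ obtained by deleting that one in-part edge has $\chi \le m$ after removing $D$ and is $F$-free, so $\cN(K_r, G) \le \cN(K_r, G - uv) + (\text{copies of } K_r \text{ through } uv)$, and bound the latter. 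Hmm — this line needs the stability more carefully.

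Let me restate the cleaner second-regime argument: since $\chi(G) > m$ but $G$ is $o(n^2)$-close to $T_m(n)$, the Andr\'asfai--Erd\H os--S\'os / stability refinement for color-critical $F$ says that an $F$-free graph which is not $m$-colorable must have $\Omega(n)$ fewer edges than $T_m(n)$ — indeed, a non-$m$-colorable $F$-free graph, being nearly $m$-partite, has a vertex of degree significantly below $(1-1/m)n$ (one can find a vertex incident to an added in-part edge whose neighborhood is constrained), and removing such vertices and iterating, or directly: the maximum number of edges in an $F$-free graph with $\chi > m$ is $|E(T_m(n))| - \Omega(n)$ (this is essentially in Simonovits's original paper, as the "second-largest" $F$-free graphs are $T_m(n)$ minus a few edges). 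Then apply the same supersaturation step as in the first regime: an edge deficit of $\Omega(n)$ against $T_m(n)$, for a graph that is otherwise nearly extremal, yields a $K_r$-deficit of $\Omega(n \cdot n^{r-2}) = \Omega(n^{r-1})$, since each of the $\Omega(n)$ "missing" edges, placed in the balanced blow-up, would have completed $\Theta(n^{r-2})$ copies of $K_r$, and the near-extremality ensures these are not compensated elsewhere (again by Zykov-type uniqueness with a stability term).

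\smallskip

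\textbf{The main obstacle.} The delicate point is the second regime: quantifying precisely that "$F$-free plus $\chi > m$" forces an edge deficit of $\Omega(n)$ — not merely $o(n^2)$ — against $T_m(n)$, and then that this linear edge deficit is genuinely \emph{visible} in the $K_r$ count rather than being absorbed by rearrangement. This requires either citing the fine structure of near-extremal $F$-free graphs from Simonovits's stability analysis (the fact that the extremal $F$-free graphs with $\chi > m$ are classified and lie $\Theta(n)$ below $T_m(n)$ in edges) or reproving it, together with a robust version of Zykov's theorem stating that among $n$-vertex graphs with $|E(T_m(n))| - t$ edges and no $K_{m+2}$, the number of $K_r$'s is at most $\cN(K_r,T_m(n)) - c\, t\, n^{r-2}$ for $t = O(n)$. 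The first regime, by contrast, is routine supersaturation. I would expect the write-up to lean on \cite{miki} and \cite{mq} for the structural inputs and spend most of the effort on the clean statement of the Zykov-stability estimate.
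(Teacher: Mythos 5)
Your proposal takes a genuinely different route from the paper, and it contains a gap that you yourself flag as unresolved. The paper does not split into ``far from $T_m(n)$'' and ``close to $T_m(n)$'' regimes at all, and it does not need any Zykov-type robust lower bound or any quantitative statement of the form ``an edge deficit of $\Omega(n)$ forces a $K_r$-deficit of $\Omega(n^{r-1})$.'' Instead it proves the lemma in one stroke: by the Erd\H os--Simonovits extension of Andr\'asfai--Erd\H os--S\'os, an $F$-free graph with $\chi(G)>m$ has a vertex $x$ of degree at most $d=(1-\frac{1}{m-1/3})n$, which is a constant factor below $(1-\frac1m)n$. Since $F$ has a color-critical edge $uv$, the neighborhood of $x$ is $(F-v)$-free, and $\chi(F-v)\le m$, so Alon--Shikhelman bounds the number of $K_r$'s through $x$ by $(1+o(1))\cN(K_{r-1},T_{m-1}(d))$. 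The copies of $K_r$ avoiding $x$ number at most $\cN(K_r,T_m(n-1))$. Comparing with the count through a vertex $y$ of a largest class of $T_m(n)$, namely $(1+o(1))\cN(K_{r-1},T_{m-1}(\lfloor(1-\frac1m)n\rfloor))$ plus $\cN(K_r,T_m(n-1))$, and using that the degree gap is linear in $n$, immediately yields a deficit of $\Omega(n^{r-1})$.

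The concrete gap in your plan is exactly the step you label ``the main obstacle'': you posit a robust Zykov-type estimate (that an $n$-vertex graph with $|E(T_m(n))|-t$ edges and no large clique has at most $\cN(K_r,T_m(n))-c\,t\,n^{r-2}$ copies of $K_r$ for $t=O(n)$) but you do not prove it, and it is not a standard off-the-shelf result. Moreover, your near-extremal-regime analysis wanders: you first try to extract extra $K_r$'s from an in-part edge (which, as you notice, goes in the wrong direction), and the eventual appeal to ``deleting $t$ edges each killing $\Theta(n^{r-2})$ copies'' does not work as a lower bound on the deficit, because the $t$ deleted edges can concentrate on few vertices so the destroyed $K_r$'s overlap heavily, and because the graph also gains added edges whose new $K_r$'s must be accounted for. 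So as written the second regime does not close. Your first-regime supersaturation step is fine but also unnecessary: the paper's argument via a single low-degree vertex handles all $F$-free graphs with $\chi>m$ uniformly and avoids the dichotomy altogether.
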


For Berge hypergraphs, we obtain the following. 

\begin{proposition}\label{bergecoro} Let $\chi(F)>r$. Then $\ex_r(n,\textup{Berge-}F)=\ex(n,K_r,F)+o(n^2)$.
\end{proposition}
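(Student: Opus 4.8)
The plan is to sandwich $\ex_r(n,\textup{Berge-}F)$ between $\ex(n,K_r,F)$ and $\ex(n,K_r,F)+o(n^2)$, using the known inequality $\ex(n,K_r,F)\le\ex_r(n,\textup{Berge-}F)\le\cex(n,(K_r,K_2),F)$ of F\"uredi--Kostochka--Luo and Gerbner--Methuku--Palmer. The lower bound is immediate, so the whole task reduces to showing
\[
\cex(n,(K_r,K_2),F)\le \ex(n,K_r,F)+o(n^2).
\]
Write $m+1:=\chi(F)$, so $m\ge r$ by hypothesis. Here one should split into two cases according to whether $F$ has a color-critical edge or not.

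First consider the case where $F$ has a color-critical edge. Take an $F$-free graph $G$ on $n$ vertices with its edges $2$-colored so that $\cN(K_r,G_{\text{blue}})+\cN(K_2,G_{\text{red}})=\cex(n,(K_r,K_2),F)$. Since $G_{\text{red}}\subseteq G$ and $G$ is $F$-free, $\cN(K_2,G_{\text{red}})\le e(G)\le\ex(n,F)$, and by Simonovits' theorem $\ex(n,F)=|E(T_m(n))|=(1-1/m+o(1))n^2/2$; in particular $e(G)=O(n^2)$, which is the right order. Now I would distinguish whether $\chi(G)>m$ or $\chi(G)\le m$. If $\chi(G)>m$, then \lref{stabi} applies (with $m$ in place of $m$, $F$ being $(m+1)$-chromatic with a color-critical edge, and $r<m+1$) and gives $\cN(K_r,G_{\text{blue}})\le\cN(K_r,G)\le\ex(n,K_r,F)-\Omega(n^{r-1})$. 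For $r\ge 2$ this loss $\Omega(n^{r-1})$ is not by itself enough to absorb the red term $O(n^2)$ when $r=2$, so one needs a little more care: when $r=2$ the blue term is also just a count of edges and $\cN(K_2,G_{\text{blue}})+\cN(K_2,G_{\text{red}})=e(G)\le\ex(n,F)=\ex(n,K_2,F)$, so the bound is trivial; when $r\ge 3$ we have $\ex(n,K_r,F)=\Theta(n^r)$ and the red term $O(n^2)=o(n^r)\subseteq\ex(n,K_r,F)+o(n^2)$ trivially, regardless of stability. So in fact the case $\chi(G)>m$ is easy; the only case that matters is $\chi(G)\le m$. In that case $G$ is $K_{m+1}$-free (indeed $m$-colorable), so $\cN(K_r,G_{\text{blue}})\le\cN(K_r,G)\le\ex(n,K_r,K_{m+1})=\cN(K_r,T_m(n))$, and by Zykov's theorem (quoted in the introduction) and Ma--Qiu this is at most $\ex(n,K_r,F)$ whenever $n$ is large, since $F$ has a color-critical edge. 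Adding the red term $\le\ex(n,F)=O(n^2)$ gives the claim with room to spare when $r\ge 3$, and is trivial when $r=2$.

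The real content, then, is the case $\chi(F)>r$ but $F$ has no color-critical edge. Here I would not use stability but instead note that it suffices to bound $\cN(K_r,G_{\text{blue}})$ by $\ex(n,K_r,F)+o(n^2)$ for an arbitrary $F$-free $G$. Since $\chi(F)=m+1>r$, by the Erd\H os--Stone--Simonovits theorem applied to the generalized Tur\'an function (Alon--Shikhelman \cite{as}) we have $\ex(n,K_r,F)=(1+o(1))\ex(n,K_r,K_{m+1})=(1+o(1))\cN(K_r,T_m(n))=\Theta(n^r)$, which for $r\ge 3$ dominates $O(n^2)$, so again the red term is absorbed; and $\cN(K_r,G_{\text{blue}})\le\cN(K_r,G)\le\ex(n,K_r,F)$ directly. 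For $r=2$ the statement is $\ex_2(n,\textup{Berge-}F)=\ex(n,F)+o(n^2)$, which follows since a Berge-$F$-free $2$-uniform hypergraph is just an $F$-free graph.

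Summarizing, the proof is really a short case analysis: when $r\ge 3$, $\ex(n,K_r,F)=\Theta(n^r)$ swamps the red contribution $\le\ex(n,F)=O(n^2)$ and the result follows from the basic inequality $\ex_r(n,\textup{Berge-}F)\le\cex(n,(K_r,K_2),F)\le\ex(n,K_r,F)+\ex(n,F)$; when $r=2$ the statement is a tautology. The place where I expect one has to be careful is checking that the problem statement is not asking for something sharper hidden in the $o(n^2)$ — e.g. one might hope the error is $o(n^{r-1})$ when $F$ has a color-critical edge, which is exactly where \lref{stabi} would be the crucial ingredient — but for the stated $o(n^2)$ bound the main obstacle is merely organizing the cases and invoking Simonovits/Ma--Qiu and Zykov at the right moment. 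So the anticipated hard part is essentially bookkeeping rather than a new idea; the one genuine subtlety is the interplay with \lref{stabi} in the color-critical case, which is needed only to get the better error term but not for the $o(n^2)$ claim itself.
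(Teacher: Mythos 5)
Your proof has a genuine gap: the bound $\cex(n,(K_r,K_2),F)\le\ex(n,K_r,F)+\ex(n,F)$ gives an additive error of $\ex(n,F)=\Theta(n^2)$, which is \emph{not} $o(n^2)$. You argue that for $r\ge3$, $\ex(n,K_r,F)=\Theta(n^r)$ ``swamps'' the $O(n^2)$ red contribution, but this only yields the weaker multiplicative statement $\ex_r(n,\textup{Berge-}F)=(1+o(1))\ex(n,K_r,F)$, not the claimed $\ex(n,K_r,F)+o(n^2)$. The proposition asserts that the \emph{difference} $\ex_r(n,\textup{Berge-}F)-\ex(n,K_r,F)$ is $o(n^2)$; your bound only gives that it is $O(n^2)$. (Concretely, if $\ex(n,K_r,F)\approx cn^r$ and the error were genuinely $c'n^2$, that would be $(1+o(1))\ex(n,K_r,F)$ but not $\ex(n,K_r,F)+o(n^2)$.) Your $r=2$ case is fine (the statement is trivial there), but for $r\ge3$ the improvement from $O(n^2)$ to $o(n^2)$ is the entire content of the proposition, and you cannot skip it.

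The paper's actual argument is the Corollary to \lref{stabimaqi}: take an extremal colored $F$-free $G$. Since $\cN(K_r,G_1)$ must be within $o(n^r)$ of $\cN(K_r,T_m(n))$ (otherwise the tiny contribution from $H_2=K_2$ cannot compensate), Ma--Qiu stability forces $G$ to differ from $T_m(n)$ by only $o(n^2)$ edges; a further counting step then shows that in the common part $G'\cap T_m(n)$ all but $o(n^2)$ of the edges must be blue (else the blue $K_r$-count again drops by $\Omega(n^r)$). Hence the red graph has only $o(n^2)$ edges, which is exactly the $o(n^2)$ error term claimed. This stability input is precisely the ``new idea'' you concluded was not needed --- it is. Your intuition that \lref{stabi} would be required for a sharper $o(n^{r-1})$ bound is fine, but even the $o(n^2)$ bound already relies on the structural information from \lref{stabimaqi}, not on bookkeeping.
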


We remark that this bound was improved to $\ex_r(n,\textup{Berge-}F)=\ex(n,K_r,F)+O(1)$ in \cite{ge3}.
In Section 3, we deal with the colored variant. We show that each tuple of cliques is color-resistant for every $n$, and show some tuples that are not color-resistant.

In Section 4 we deal with the uncolored variant. We show that the analogue of Conjecture \ref{mainconj} does not hold, and also examine some particular instances of the problem, where we count some graphs on five vertices in triangle-free graphs.

\section{Proofs using stability}

Let us describe an approach to show that a tuple $(H_1,\dots,H_k,F)$ is resistant and/or color resistant for $n$ large enough. 
Assume without loss of generality that $\max_{i\le k}\ex(n,H_i,F)=\ex(n,H_1,F)=\cN(H_1,G_0)$, where $G_0$ is $F$-free and has $n$ vertices. Moreover, assume $\ex(n,H_1,F)$ is way larger than $\ex(n,H_i,F)$ for every $i>1$. Assume furthermore that there is a stability result concerning $\ex(n,H_1,F)$, stating that if $G$ is $F$-free on $n$ vertices, and has at least $\ex(n,H_1,F)-x$ copies of $H_1$, then $G$ is a subgraph of $G_0$. 
Now if $x>\sum_{i=2}^k\ex(n,H_i,F)$, then the graph which maximizes $\cN(H_1,\dots,H_k;G')$ has to be a subgraph of $G_0$, hence we can assume it is $G_0$. Thus we solved the uncolored variant, and in the colored variant we reduced the problem to show that the best coloring of $G_0$ is when every edge has color 1.

In the first version of this manuscript in 2020 I wrote that unfortunately, there are not many stability results for generalized Tur\'an problems. However, this has changed since then, see e.g. \cite{dahi} and the citations within. Those results can be used to obtain further bounds and exact results in our settings.





Gerbner and Palmer \cite{gp3} showed that for $n$ large enough, we have $\ex(n,P_4,C_5)=\cN(P_4,T_2(n))$. Moreover, if a $C_5$-free graph $G$ has $\alpha$ edges that are contained in triangles, then $\cN(P_4,G)\le \cN(P_4,T_2(n))-(1+o(1))\alpha n^2/12$. This result shows that a $C_5$-free graph which has almost the largest possible number of $P_4$s must be close to $T_2(n)$, but it measures the ``distance'' from $T_2(n)$ in an unusual way. For us, this is a very useful way though, as it implies the following.

\begin{proposition}\label{colres}
$(P_4,K_3,C_5)$ is resistant and color-resistant for every $n$ large enough. 
\end{proposition}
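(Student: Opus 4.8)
The plan is to apply the general stability strategy outlined just before the statement, using the Gerbner–Palmer result on $\cN(P_4,G)$ for $C_5$-free graphs as the stability input. Set $G_0 = T_2(n)$. We know $\ex(n,P_4,C_5) = \cN(P_4,T_2(n))$ for $n$ large, and $\cN(P_4,T_2(n)) = \Theta(n^4)$ (roughly $n^4/12$, counting paths with the two endpoints on one side and the two internal vertices on the other, plus the symmetric count). On the other hand $\ex(n,K_3,C_5) = O(n^2)$, since a $C_5$-free graph with many triangles would be dense; more crudely, by the stability statement itself, if $G$ is $C_5$-free with $\alpha$ edges in triangles then $\cN(K_3,G) \le \alpha \cdot n$ say, which is $O(n^2)$ once we bound $\alpha = O(n)$ — or we can simply invoke that $\ex(n,K_3,C_5) = o(n^4)$, which is all we need.

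First I would take an $n$-vertex $C_5$-free graph $G$ maximizing $\cN^{\mathrm{col}}(P_4,K_3;G)$ over all $2$-edge-colorings, and also (for the uncolored claim) one maximizing $\cN(P_4,K_3;G)$. Let $\alpha$ be the number of edges of $G$ contained in a triangle. The Gerbner–Palmer inequality gives $\cN(P_4,G) \le \cN(P_4,T_2(n)) - (1+o(1))\alpha n^2/12$. Meanwhile $\cN(K_3,G)$ is at most the number of triangles, and every triangle uses at least one of these $\alpha$ edges, so $\cN(K_3,G) \le \alpha \cdot n$ (each such edge lies in at most $n$ triangles). Hence in the uncolored variant
\[
\cN(P_4,K_3;G) \le \cN(P_4,T_2(n)) - (1+o(1))\alpha n^2/12 + \alpha n,
\]
and for $n$ large the negative term dominates the gain of $\alpha n$ unless $\alpha = 0$. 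So $\cN(P_4,K_3;G) \le \cN(P_4,T_2(n))$, with equality forcing $G$ triangle-free (so the $K_3$ count is zero) and in fact $\cN(P_4,G) = \cN(P_4,T_2(n))$; since $T_2(n)$ itself is $C_5$-free and triangle-free and achieves $\cN(P_4,T_2(n))$, it is an optimizer. This proves $(P_4,K_3,C_5)$ is resistant.

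For color-resistance, the same estimate applies verbatim because $\cN^{\mathrm{col}}(P_4,K_3;G) = \cN(P_4,G_1) + \cN(K_3,G_2) \le \cN(P_4,G) + \cN(K_3,G)$, where $G_1,G_2$ are the blue and red subgraphs; the right-hand side is again at most $\cN(P_4,T_2(n)) - (1+o(1))\alpha n^2/12 + \alpha n \le \cN(P_4,T_2(n)) = \ex(n,P_4,C_5) = \max\{\ex(n,P_4,C_5),\ex(n,K_3,C_5)\}$ for $n$ large. Coloring every edge of $T_2(n)$ blue attains this, so a monochromatic configuration is optimal and $(P_4,K_3,C_5)$ is color-resistant. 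The only point requiring a little care — the main obstacle — is the bound $\cN(K_3,G) = O(\alpha n)$ (or any bound that is $o(\alpha n^2)$): one must check that the contribution of triangles cannot overwhelm the quadratic-in-$n$ loss in the $P_4$ count. This is immediate from the observation that each of the $\alpha$ triangle-edges extends to a triangle in at most $n-2$ ways, so the triangle term is genuinely lower order, and the inequality closes for all sufficiently large $n$.
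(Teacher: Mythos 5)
Your proof is correct and follows essentially the same route as the paper: both use the Gerbner--Palmer stability inequality $\cN(P_4,G)\le\cN(P_4,T_2(n))-(1+o(1))\alpha n^2/12$ to show that the quadratic loss in $P_4$'s caused by any triangle dominates the possible gain in $K_3$'s. The only difference is that you bound $\cN(K_3,G)\le\alpha n$ directly from the definition of $\alpha$, whereas the paper invokes the Bollob\'as--Gy\H ori bound $\ex(n,K_3,C_5)=O(n^{3/2})$; your version is marginally more self-contained but the comparison is the same.
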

We will also use a result of Bollob\'as and Gy\H ori \cite{BGy} that states $\ex(n,K_3,C_5)=O(n^{3/2})$.

\begin{proof} Let $G$ be a $C_5$-free graph.
If there is a triangle in $G$, then $\cN(K_3,G)+\cN(P_4,G)\le \ex(n,K_3,C_5)+\ex(n,P_4,C_5)-(1+o(1))\alpha n^2/12< \ex(n,P_4,C_5)$ for $n$ large enough.
\end{proof}

Let us return to the weighted variant for an observation. The above proof shows that if we add up the number of $P_4$'s plus three times the number of triangles in $G$, we obtain the same upper bound. For sufficiently large $n$, a theorem of Simonovits \cite{miki} implies that $\ex(n,C_5)=|E(T_2(n))|$ while a result of Gerbner \cite{gerbner2} implies that $\ex(n,P_3,C_5)=\cN(P_3,(T_2(n))$. Therefore, for $n$ large enough, among $n$-vertex $C_5$-free graphs $T_2(n)$ has the largest second Zagreb index. 
Another stability result is due to Ma and Qiu \cite{mq}, who showed the following.

\begin{lemma}[Ma, Qiu \cite{mq}]\label{stabimaqi} Let $F$ be a graph with $\chi(F)=m+1>r\ge 2$. If $G$ is an $n$-vertex $F$-free
graph with $\cN(K_r,G) \ge \cN(K_r,T_m(n)) - o(n^r)$, then $G$ can be obtained from $T_m(n)$ by
adding and deleting a set of $o(n^2)$ edges.

\end{lemma}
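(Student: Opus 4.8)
The plan is to prove the statement by the regularity method: pass to a bounded reduced graph $R$, observe that $R$ is $K_{m+1}$-free (this is where $\chi(F)=m+1$ enters), apply a stability result for the Zykov--Moon--Moser--Nikiforov bound on the number of $K_r$'s in a $K_{m+1}$-free graph, and then transfer the resulting structure back to $G$. Throughout, write $s=|V(F)|$; since $\chi(F)=m+1$, a proper colouring of $F$ shows that $F$ is a subgraph of the complete $(m+1)$-partite graph with all parts of size $s$, which we denote $K_{m+1}(s)$.

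First I would fix a small $\eps>0$, to be pinned down at the very end, and apply Szemer\'edi's regularity lemma to $G$, obtaining an $\eps$-regular equitable partition $V_1,\dots,V_t$ with $1/\eps\le t\le M(\eps)$, together with an exceptional set of at most $\eps n$ vertices. Let $R$ be the reduced graph on $[t]$ in which $ij$ is an edge exactly when $(V_i,V_j)$ is $\eps$-regular of density at least $\sqrt\eps$. The crucial point is that $R$ is $K_{m+1}$-free: a $K_{m+1}$ in $R$ would, via the embedding lemma (for $\eps$ small enough in terms of $F$), produce a copy of $K_{m+1}(s)$, hence of $F$, in $G$. A routine application of the counting lemma then yields
\[
\cN(K_r,G)=\Big(\sum_{Q\subseteq R,\ Q\cong K_r}\ \prod_{ij\in E(Q)}d(V_i,V_j)\Big)(n/t)^r+O\big(g(\eps)\,n^r\big),
\]
where $g(\eps)\to0$ as $\eps\to0$; the error collects the copies of $K_r$ with two vertices in one part (here $t\ge1/\eps$ is used), those meeting an irregular pair, those meeting a pair of density below $\sqrt\eps$, and the accumulated counting-lemma error. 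Since every density is at most $1$ and $R$ is $K_{m+1}$-free, Zykov's theorem bounds the bracketed sum by $\cN(K_r,R)\le\cN(K_r,T_m(t))$, and one checks that $\cN(K_r,T_m(t))(n/t)^r=\cN(K_r,T_m(n))+O(\eps n^r)$ (again using $t\ge1/\eps$); in particular this already gives the asymptotic bound $\ex(n,K_r,F)\le\cN(K_r,T_m(n))+o(n^r)$.

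Now assume $\cN(K_r,G)\ge\cN(K_r,T_m(n))-\delta_0 n^r$. Feeding this into the chain of inequalities above and dividing through by $(n/t)^r$, one obtains simultaneously $\cN(K_r,R)\ge\cN(K_r,T_m(t))-\eta t^r$ and $\sum_{Q\subseteq R,\,Q\cong K_r}\bigl(1-\prod_{ij\in E(Q)}d(V_i,V_j)\bigr)\le\eta t^r$, where $\eta=\eta(\delta_0,\eps)\to0$ as $\delta_0,\eps\to0$. The first inequality says $R$ is a near-extremal $K_{m+1}$-free graph for the $K_r$-count, so by the stability version of Zykov's theorem there is an $m$-partition $U_1,\dots,U_m$ of $[t]$ for which $R$ differs from $T_m(t)$ in at most $\beta(\eta)t^2$ pairs, with $\beta(\eta)\to0$. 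The second inequality then, after a short computation using this closeness and the fact that every edge of $T_m(t)$ extends to $\Omega(t^{r-2})$ copies of $K_r$ (this is where $r\le m$ is used), shows that all but $o(t^2)$ of the ``cross'' pairs $(V_i,V_j)$ --- those with $i\in U_a$, $j\in U_b$, $a\ne b$ --- have density at least $1-\theta$, for a small constant $\theta$ I would fix in advance. Finally I would put $W_a=\bigcup_{i\in U_a}V_i$, dump the exceptional vertices into $W_1$, and estimate the edit distance of $G$ from the complete $m$-partite graph on $W_1,\dots,W_m$ (which, having near-balanced parts, is itself within $O(\eps n^2)$ of $T_m(n)$): pairs of the regularity partition inside a single part contribute $O(\eps n^2)$; cross pairs that are non-edges of $R$, pairs lying inside some $U_a$, and the exceptional cross pairs together number $O((\beta(\eta)+\eps)t^2)$ and hence contribute $O((\beta(\eta)+\eps)n^2)$; the non-edges of $G$ inside ``good'' cross pairs contribute at most $\theta n^2$; and the non-edges of $G$ inside the remaining cross pairs contribute $O((\eta/\theta)n^2)$. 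Adding these up, the edit distance from $T_m(n)$ is at most $\big(\theta+O(\eps)+O(\beta(\eta))+O(\eta/\theta)\big)n^2+o(n^2)$; given the target accuracy $\eps'$, one chooses $\theta$, then $\eps$ (small enough also for the regularity tools to apply), then $\delta_0$, then $n_0$, so that this is below $\eps' n^2$ for all $n\ge n_0$ --- which is exactly the claim.

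The main obstacle is the ``second level'' of the stability argument. Knowing only that $R$ is close to $T_m(t)$ does not make $G$ close to $T_m(n)$: a regular blow-up of $T_m(t)$ in which every pair has density $\tfrac12$ is $\Theta(n^2)$-far from $T_m(n)$ while containing only a constant-factor as many copies of $K_r$. So one has to genuinely use the near-maximality of the $K_r$-count --- not merely the $F$-freeness --- to force almost all of the correct cross densities up to nearly $1$, and then carry the three accuracy parameters $\theta,\eps,\delta_0$ through the estimates in the right order. A secondary issue is the appeal to stability for the clique-counting form of Zykov's theorem; this is a standard stability statement (obtainable from the Erd\H os--Simonovits method, or by a compactness argument for graphons), and since $R$ has only boundedly many vertices one could also prove the little that is needed about it directly inside the present argument.
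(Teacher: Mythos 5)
You should first be aware that the paper contains no proof of this lemma at all: it is imported verbatim from Ma and Qiu \cite{mq} and used as a black box, so there is no in-paper argument to compare yours against; any comparison is with the external source rather than with this paper. Judged on its own terms, your regularity-method sketch is a legitimate and essentially standard route to a statement of this type, and the skeleton is sound: $R$ is $K_{m+1}$-free via the embedding lemma, the two-sided counting formula, near-extremality of $R$ together with near-$1$ densities on most cross pairs, and the transfer back to $G$. You also correctly identify the genuine crux, namely that closeness of $R$ to $T_m(t)$ alone is useless and the density information must be exploited.

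Two soft spots deserve attention. First, the ingredient you defer --- stability for the clique-counting Tur\'an problem in $K_{m+1}$-free graphs, applied to $R$ --- is not a side remark but the heart of the lemma: it is precisely the case $F=K_{m+1}$ of the statement being proved, and it must hold uniformly in $t$, since $t$ ranges up to $M(\eps)$ while the error $\eta$ available at scale $\eps$ is bounded below by the counting-lemma error $g(\eps)$; so your parenthetical escape hatch ``$R$ has only boundedly many vertices'' does not work, although your other two suggestions (an Erd\H os--Simonovits-style argument, or graphon compactness, where cut-distance closeness to the $0$--$1$ Tur\'an graphon does control edit distance) are viable. A complete write-up must supply this. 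Second, some bookkeeping is off, though repairably so: the pairs $(V_i,V_j)$ with $i,j$ in the same class $U_a$ number $\Theta(t^2)$, not $O((\beta+\eps)t^2)$ --- their contribution is small not because they are few but because those that are edges of $R$ lie among the $\beta t^2$ discrepancies while the rest have density below $\sqrt\eps$, giving an extra $O(\sqrt{\eps}\,n^2)$ term; similarly, the imbalance of the parts $W_a$ (hence the distance from the $m$-partite graph on the $W_a$'s to $T_m(n)$) and the count of low-density cross pairs with few $K_r$-extensions are governed by $\beta(\eta)$ (in fact $\sqrt{\beta}$), not by $\eps$ alone. None of this changes the ``sum of vanishing errors'' conclusion. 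Finally, a lighter and more modular route to the same lemma, which avoids all density bookkeeping: supersaturation shows an $F$-free graph has $o(n^{m+1})$ copies of $K_{m+1}$; the clique removal lemma then deletes $o(n^2)$ edges, hence destroys only $o(n^r)$ copies of $K_r$, reducing the whole statement to the $K_{m+1}$-free stability result that your argument needs anyway.
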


\begin{corollary} Let $\chi(F)=m+1>r$, $H_1=K_r$ and for every $i>1$, $|V(H_i)|\le p<r$. Then  $\cex(n,(H_1,\dots,H_k),F)=\ex(n,H_1,F)+o(n^p)$. 

\end{corollary}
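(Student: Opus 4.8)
The plan is to dispatch the lower bound immediately and put all the work into the upper bound, by showing that any extremal colored graph must have all but $o(n^2)$ of its edges in color $1$, so that the contribution of the colors $2,\dots,k$ is forced to be of lower order. For the lower bound, Proposition~\ref{obse} already gives $\cex(n,(H_1,\dots,H_k),F)\ge \ex(n,H_1,F)$ (equivalently: color a graph realizing $\ex(n,K_r,F)$ entirely with color $1$). Hence it remains to prove $\cex(n,(H_1,\dots,H_k),F)\le \ex(n,H_1,F)+o(n^p)$. I would first record the harmless convention that each $H_i$ with $i\ge 2$ has at least one edge — otherwise $\cN(H_i,G_i)$ depends only on $n$ and is of order $n^{|V(H_i)|}$, and the stated identity cannot hold in that form; in particular this makes $p\ge 2$, the case $p=1$ being trivial.

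Next I would fix an $n$-vertex $F$-free graph $G$ with a $k$-edge-coloring attaining $\cex(n,(H_1,\dots,H_k),F)$, and write $G_i$ for its color-$i$ subgraph. Since $|V(H_i)|\le p$ for $i\ge 2$, even the trivial bound $\sum_{i\ge 2}\cN(H_i,G_i)=O(n^p)$ gives
$$\cN(K_r,G_1)\ \ge\ \cex(n,(H_1,\dots,H_k),F)-O(n^p)\ \ge\ \ex(n,H_1,F)-O(n^p)=\ex(n,K_r,F)-O(n^p).$$
As $T_m(n)$ is $F$-free (because $\chi(T_m(n))=m<m+1=\chi(F)$), one has $\ex(n,K_r,F)\ge \cN(K_r,T_m(n))=\Theta(n^r)$, and since $p<r$ the loss $O(n^p)$ is $o(n^r)$; thus $G_1$ is an $n$-vertex $F$-free graph with $\cN(K_r,G_1)\ge \cN(K_r,T_m(n))-o(n^r)$. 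Now Lemma~\ref{stabimaqi} applies to $G_1$: it is obtained from $T_m(n)$ by adding and deleting $o(n^2)$ edges, so in particular $|E(G_1)|\ge |E(T_m(n))|-o(n^2)$.

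The decisive step is to bound the number of edges of colors $\ne 1$. By the Erd\H os--Stone--Simonovits theorem, $|E(G)|\le \ex(n,F)=|E(T_m(n))|+o(n^2)$, whence the number of non-color-$1$ edges is $|E(G)|-|E(G_1)|=o(n^2)$; in particular each $G_i$ with $i\ge 2$ has $o(n^2)$ edges. For such $G_i$, fixing an edge $xy$ of $H_i$, any copy of $H_i$ sends $xy$ to an edge of $G_i$ (at most $2|E(G_i)|=o(n^2)$ ordered choices) and the remaining at most $p-2$ vertices anywhere (at most $n^{p-2}$ choices), so $\cN(H_i,G_i)=o(n^2)\cdot n^{p-2}=o(n^p)$. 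Summing over $i$ and using that $G$ is $F$-free (so $\cN(K_r,G_1)\le \cN(K_r,G)\le \ex(n,K_r,F)=\ex(n,H_1,F)$) yields
$$\cex(n,(H_1,\dots,H_k),F)=\cN(K_r,G_1)+\sum_{i=2}^k\cN(H_i,G_i)\le \ex(n,H_1,F)+o(n^p),$$
which together with the lower bound finishes the proof.

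I expect the main obstacle to be exactly the transition in the third paragraph: turning ``$G_1$ has nearly the extremal number of $K_r$'s'' into ``$G$ has only $o(n^2)$ edges outside color $1$''. This is the one point where genuine structural information — the Ma--Qiu stability lemma, not merely the asymptotic formula $\ex(n,K_r,F)=(1+o(1))\cN(K_r,T_m(n))$ — is needed, combined with the matching Erd\H os--Stone upper bound on $|E(G)|$. Everything else is bookkeeping with error terms, the thing to watch being that the hypothesis $p<r$ is precisely what renders the crude $O(n^p)$ slack in $\cN(K_r,G_1)$ negligible (namely $o(n^r)$), so that the stability lemma can be invoked.
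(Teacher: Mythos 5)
Your proof is correct, and it takes a genuinely different (and in fact somewhat cleaner) route to the upper bound than the paper does. Both arguments hinge on the Ma--Qiu stability lemma (Lemma~\ref{stabimaqi}), but the paper applies it to the full $F$-free graph $G$, concludes that $G$ is obtainable from $T_m(n)$ by $o(n^2)$ edge modifications, and then has to pass through the auxiliary object $G'$ (the common part of $G$ and $T_m(n)$): it shows that if $T_m(n)$ had $\Omega(n^2)$ more edges than the color-$1$ part of $G'$, then $G'_1$ would lose $\Omega(n^r)$ copies of $K_r$ relative to $T_m(n)$, a contradiction once the $o(n^r)$ copies of $K_r$ using an added edge and the $O(n^{r-1})$ copies of $H_2,\dots,H_k$ are accounted for. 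You instead apply the stability lemma directly to $G_1$ (which is legitimate since $G_1$ is $F$-free and the crude $O(n^p)=o(n^r)$ slack still satisfies the lemma's hypothesis), obtaining $|E(G_1)|\ge |E(T_m(n))|-o(n^2)$ in one stroke, and then close the gap with the Erd\H os--Stone--Simonovits bound $|E(G)|\le |E(T_m(n))|+o(n^2)$. What this buys is the elimination of the paper's ``common part'' bookkeeping; the price is invoking Erd\H os--Stone--Simonovits as an extra ingredient (the paper gets the needed edge bound for free from having applied stability to $G$ rather than $G_1$). Your preliminary remark that each $H_i$ with $i\ge 2$ must be assumed to have an edge (so $p\ge 2$) is a legitimate observation: without it the final $o(n^p)$ bound on $\cN(H_i,G_i)$ breaks down, and indeed the paper's own proof carries the same implicit assumption at the step ``$o(n^2)$ edges of color $i$ imply $o(n^{|V(H_i)|})$ copies of $H_i$.''
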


\begin{proof}
Let $G$ be an $F$-free graph with $\cN^{\mathrm{col}}(H_1,\dots,H_k;G)=\cex(n,(H_1,\dots,H_k),F)$. If $\cN(K_r,T_{m}(n))-\cN(K_r,G)=\Omega(n^r)$, then $\cN^{\mathrm{col}}(H_1,\dots,H_k;G)<\cN(K_r,T_{m}(n))$ (a contradiction), as there are $O(n^{r-1})$ copies of $H_2,\dots,H_k$ in $G$. Otherwise we can apply Lemma \ref{stabimaqi}, thus we can obtain $G$ by adding and deleting a set of $o(n^2)$ edges from $T_{\chi(F)-1}(n)$. 

Let $G'$ be the common part of $G$ and $T_{m}(n)$ (thus it can be obtained from either $G$ or $T_m(n)$ by deleting $o(n^2)$ edges). If $|E(T_{m})|-|E(G'_1)|=\Omega(n^2)$, then $G_1'$ has $\cN(K_r,T_{m}(n))-\Omega(n^r)$ copies of $K_r$. Every other copy of $H_1$ in $G$ contains at least one of the $o(n^2)$ edges added to $T_{\chi(F)-1}(n)$, thus there are $o(n^r)$ of them. Again, there are $O(n^{r-1})$ copies of $H_2,\dots,H_k$ in $G$, thus $\cN^{\mathrm{col}}(H_1,\dots,H_k;G)<\cN(K_r,T_{m}(n))$, a contradiction.

Therefore, there are $o(n^2)$ edges in $G'$ that are not of color 1, and there are $o(n^2)$ edges of $G$ not in $G'$. Therefore, there are $o(n^2)$ edges of each other color, thus there are $o(n^{|V(H_i)|})=o(n^p)$ copies of $H_i$ in $G_i$, for every $i>1$. There are at most $\ex(n,H_1,F)$ copies of $H_1$ in $G_1$, thus we are done.
\end{proof}

Recall that $ex_r(n,\textup{Berge-}F)\le \cex(n, (K_r,K_2),F)$
(a result of \cite{gp1}, which we mentioned in Section 1).
This implies Proposition \ref{bergecoro}.
However, Lemma \ref{stabimaqi} is not strong enough to obtain a sharp result in our setting. Therefore, we prove Lemma \ref{stabi} for graphs with a color-critical edge. 
We restate Lemma \ref{stabi} here for convenience.

\begin{lemma*} Let $F$ be an $(m+1)$-chromatic graph with a color-critical edge and $r<m+1$. If $G$ is an $n$-vertex $F$-free graph with chromatic number more than $m$, then $\ex(n,K_r,F)-\cN(K_r,G)=\Omega(n^{r-1})$.
\end{lemma*}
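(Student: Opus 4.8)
The plan is to use the color-critical edge of $F$ together with the classical Simonovits argument, but to keep track of quantities more carefully than the statement of Lemma~\ref{stabimaqi} does. Let me write $\chi(F)=m+1$, let $uv$ be a color-critical edge, so that $F-uv$ is $m$-colorable, and fix a proper $m$-coloring of $F-uv$ in which $u$ and $v$ receive the same color. The key structural fact is the standard one: if a graph $G$ contains $m$ pairwise ``fully joined'' large vertex sets (a blow-up of $K_m$ with parts of linear size) plus one extra edge inside one of those parts, then $G$ contains $F$. So in an $F$-free $G$ one cannot have such a configuration.

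**Key steps.** First I would invoke the Erd\H os--Simonovits structure: since $\chi(F)=m+1$, an $F$-free graph $G$ on $n$ vertices has $|E(G)|\le |E(T_m(n))|+o(n^2)$, and more precisely, since $F$ has a color-critical edge, for $n$ large $|E(G)|\le |E(T_m(n))|$ with equality only for $T_m(n)$ (Simonovits~\cite{miki}); but I only need the near-extremal version here, because our hypothesis is weaker — we only know $G$ is $F$-free with $\chi(G)>m$, not that it is edge-extremal. The second step is the crucial one: I would show that an $F$-free graph with chromatic number more than $m$ cannot simultaneously be ``almost complete $m$-partite.'' Concretely, suppose for contradiction that $\cN(K_r,G)\ge \cN(K_r,T_m(n))-o(n^{r-1})$. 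By Lemma~\ref{stabimaqi} (Ma--Qiu), $G$ is obtained from $T_m(n)$ by adding/deleting $o(n^2)$ edges; I then want to upgrade this to: the number of added edges (edges inside a part of the reference $T_m(n)$) is actually very small — so small that $G$ becomes $m$-colorable, contradicting $\chi(G)>m$. This upgrade is where the color-critical edge does its work: each ``added'' edge inside a part, together with the near-complete $m$-partite skeleton, would create a copy of $F$ unless many edges are simultaneously missing near that edge; counting the $K_r$'s destroyed by those missing edges against the $K_r$'s we assumed present gives the contradiction. So in fact $G$ has \emph{no} edge inside a part after deleting $o(n^2)$ further edges — but that still does not immediately give $m$-colorability.

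**The main obstacle.** The delicate point — and the one I expect to be the real obstacle — is closing the gap between ``$G$ is $T_m(n)$ plus a few edges'' and the quantitative bound $\ex(n,K_r,F)-\cN(K_r,G)=\Omega(n^{r-1})$ when $\chi(G)>m$. The difficulty is that $\chi(G)>m$ is a very mild hypothesis: it is guaranteed by a \emph{single} vertex or small subgraph that is not $m$-colorable, which on its own destroys only $O(n^{r-2})$ copies of $K_r$, not $\Omega(n^{r-1})$. So one cannot argue purely locally. The resolution I would pursue: a near-extremal $F$-free $G$ (with $\cN(K_r,G)$ within $o(n^{r-1})$ of the maximum) must, after the Ma--Qiu cleaning, have each of its $m$ large parts spanning an \emph{independent} set except for $o(n^2)$ stray edges, and one then argues that a graph this close to complete $m$-partite whose chromatic number exceeds $m$ must have a vertex (or constant-size set) that sends edges to \emph{all} $m$ parts with linear degree into each — and such a vertex lies in $\Omega(n^{r-1})$ copies of $K_{r}$ that are ``forced'' to overlap with, hence be double-counted against, the part structure, while the absence of a clean $m$-partition costs more $K_r$'s than it creates. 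Making this trade-off precise, uniformly in $n$, is the technical heart; a robust way to do it is to prove the contrapositive — if $\ex(n,K_r,F)-\cN(K_r,G)=o(n^{r-1})$ then $G$ is $m$-colorable — by extracting from $G$ an exact complete $m$-partite spanning subgraph via a defect-form of the Ma--Qiu/Simonovits argument and checking the remaining edges cannot raise the chromatic number without a linear-order $K_r$ deficit.
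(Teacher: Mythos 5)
Your proposal identifies the right difficulty but does not actually resolve it, and the route you take (starting from Lemma~\ref{stabimaqi}) is the wrong tool for this precision. Ma--Qiu gives control at the $o(n^r)$ / $o(n^2)$-edge level, while the lemma needs an $\Omega(n^{r-1})$ deficit; upgrading by a full power of $n$ is exactly the step you flag as the ``technical heart'' and then leave open. Your worry that $\chi(G)>m$ is a ``mild hypothesis'' costing only $O(n^{r-2})$ copies of $K_r$ is the crux, and it is resolved not by further cleaning of the Ma--Qiu picture but by a classical structural theorem you do not invoke: Erd\H os and Simonovits~\cite{ersim} (extending Andr\'asfai--Erd\H os--S\'os~\cite{aes}) proved that if $F$ is $(m+1)$-chromatic with a color-critical edge and $G$ is $F$-free with $\chi(G)>m$, then $G$ has a vertex $x$ of degree at most $\bigl(1-\tfrac{1}{m-1/3}\bigr)n$. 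This is a \emph{linear} drop below the Tur\'an-graph degree $\bigl(1-\tfrac{1}{m}\bigr)n$, and $F$-freeness is essential — your intuition that a ``single bad vertex'' is cheap is true for arbitrary graphs close to $T_m(n)$ but false once $F$-freeness is imposed, precisely because of this theorem.

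With that vertex in hand the paper's proof is short and direct, needing no stability machinery at all: the neighborhood of $x$ is $F'$-free where $F'=F-v$ has chromatic number $m$ (here $uv$ is the color-critical edge), so by Alon--Shikhelman it contains at most $(1+o(1))\cN(K_{r-1},T_{m-1}(d))$ copies of $K_{r-1}$ with $d=\bigl(1-\tfrac{1}{m-1/3}\bigr)n$; the copies of $K_r$ avoiding $x$ number at most $\cN(K_r,T_m(n-1))$. Comparing with a vertex $y$ in a largest class of $T_m(n)$, whose link has $\cN\bigl(K_{r-1},T_{m-1}(\lfloor(1-\tfrac1m)n\rfloor)\bigr)$ copies of $K_{r-1}$, the gap between the two link counts is $\Omega(n^{r-1})$ because $d$ is a fixed constant factor below $(1-\tfrac1m)n$, and this already gives the lemma. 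You would do well to replace the contrapositive-via-stability plan with this direct degree argument; as written, your proposal contains a genuine gap that the Erd\H os--Simonovits theorem is needed to close.
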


This follows easily from a result of
Erd\H os and Simonovits \cite{ersim}. They, extending a result of Andr\'asfai, Erd\H os and S\'os \cite{aes}, showed that if $F$ has a color-critical edge and is $(m+1)$-chromatic, and $G$ is an $F$-free graph on $n$ vertices with chromatic number greater than $m$, then $G$ has a vertex of degree at most $(1-\frac{1}{m-1/3})n$. We also use the following result of Alon and Shikhelman \cite{as}: if $\chi(H)=t>s$, then $\ex(n,K_s,H)=(1+o(1))\binom{t-1}{s}\left(\frac{n}{t-1}\right)^s$.

\begin{proof} By the above, $G$ has a vertex $x$ of degree at most $d=(1-\frac{1}{m-1/3})n$. Let $uv$ be an edge of $F$ whose deletion decreases the chromatic number, and let $F'$ be the graph we obtain from $F$ by deleting $v$. Then $\chi(F')=m$. The neighborhood of $x$ is obviously $F'$-free, thus contains at most $(1+o(1))\cN(K_{r-1},T_{m-1}(d))$ copies of $K_{r-1}$ by the result of Alon and Shikhelman mentioned before the proof. Therefore, in $G$ the number of copies of $K_r$ containing $x$ is at most $(1+o(1))\cN(K_{r-1},T_{m-1}(d))$, while the number of copies of $K_r$ not containing $x$ is at most $\cN(K_r,T_{m}(n-1))$.
Let $y$ be a vertex in a largest class of the Tur\'an graph $T_{m}(n)$. Then the number of copies of $K_r$ containing $y$ is $(1+o(1))\cN(K_{r-1},T_{m-1}(\lfloor (1-\frac{1}{m})n\rfloor)$ , while the number of copies of $K_r$ not containing $y$ is $\cN(K_r,T_{m}(n-1))$. The difference is obviously $\Omega(n^{r-1})$, finishing the proof.
\end{proof}

Note that the bound $\Omega(n^{r-1})$ is sharp, at least for $F=K_{m+1}$, as shown by the following example. We take $T_{m}(n)$, 
and take vertices $a,a'$ in part $A$ and $b$ in part $B$. We delete the edges between $a$ and vertices of $B$, except we keep $ab$, and then add the edge $aa'$. It is easy to see that the resulting graph $G$ is $K_{m+1}$-free and its chromatic number is $m+1$. Compared to the Tur\'an graph, every $K_r$ that got deleted contains $a$, thus there are $O(n^{r-1})$ of them.

In the case $r=2$, Simonovits \cite{miki} gave stronger bounds on the smallest possible value of $\ex(n,K_r,F)-\cN(K_r,G)$: he showed it is between $n/m+c_1$ and $n/m+c_2$ for some constants $c_1$ and $c_2$. In the case $r=2$ and $F=K_{k+1}$, Brouwer \cite{br} determined the above difference exactly.
It would be of interest to obtain a stronger bound than Lemma \ref{stabi}. Still, it is enough for us to obtain the following.

\begin{proposition}\label{edgecrit}
Let $\chi(F)=m+1>r$ and assume $F$ has a color-critical edge, $H_1=K_r$ and for every $i>1$, $|V(H_i)|<r-2$. If $n$ is large enough, then $\cex(n,(H_1,\dots,H_k),F)=\ex(n,H_1,F)=\cN(K_r,T_{m}(n))$ and $\ex(n,(H_1,\dots,H_k),F)= \cN(H_1,\dots,H_k;T_{m}(n))$.
\end{proposition}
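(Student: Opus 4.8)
The plan is to run the stability argument sketched at the start of this section, using Lemma~\ref{stabi} in place of Lemma~\ref{stabimaqi} to gain the extra strength needed for an exact (rather than asymptotic) conclusion. Let $G$ be an $F$-free graph on $n$ vertices attaining $\cex(n,(H_1,\dots,H_k),F)$ (or, for the uncolored part, attaining $\ex(n,(H_1,\dots,H_k),F)$). Since each $H_i$ with $i>1$ has fewer than $r-2$ vertices, the number of copies of $H_i$ in $G$ is $O(n^{r-3})$, so $\cN^{\mathrm{col}}(H_1,\dots,H_k;G)\le \cN(K_r,G)+O(n^{r-3})$ and similarly in the uncolored case. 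First I would argue that $G$ has chromatic number at most $m$: if not, Lemma~\ref{stabi} gives $\cN(K_r,T_m(n))-\cN(K_r,G)=\Omega(n^{r-1})$, which dominates the $O(n^{r-3})$ contribution of the other $H_i$, contradicting optimality (we can compare with $T_m(n)$, all of whose edges coloured $1$, which is $F$-free because $\chi(F)=m+1$). Hence $\chi(G)\le m$.

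Next, with $\chi(G)\le m$, the graph $G$ is $K_{m+1}$-free, so by Zykov's theorem $\cN(K_r,G)\le \cN(K_r,T_m(n))$, with equality only for $G=T_m(n)$. Therefore
\[
\cex(n,(H_1,\dots,H_k),F)\le \cN(K_r,T_m(n))+O(n^{r-3}).
\]
To upgrade this to an exact statement I would show that if $G\neq T_m(n)$ then $G$ loses $\Omega(n^{r-1})$ copies of $K_r$ relative to $T_m(n)$: indeed any $m$-chromatic (hence $K_{m+1}$-free) graph on $n$ vertices that is not $T_m(n)$ satisfies $\cN(K_r,T_m(n))-\cN(K_r,G)=\Omega(n^{r-1})$ --- this is the $r\ge 2$ generalization of Turán's stability, and follows from Lemma~\ref{stabi} applied with $F=K_{m+1}$ together with the fact that among $K_{m+1}$-free graphs that \emph{are} $m$-colourable but not $T_m(n)$, Zykov's extremal argument already yields a linear-order-$n^{r-1}$ deficiency (alternatively one quotes Ma--Qiu or argues directly that some part of the $m$-colouring is not within $\pm1$ of balanced, or two parts are not complete to each other, each of which kills $\Omega(n^{r-1})$ copies of $K_r$). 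Since this deficiency again dominates the $O(n^{r-3})$ gain from $H_2,\dots,H_k$, for $n$ large the optimum forces $G=T_m(n)$.

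Finally, once $G=T_m(n)$ we read off both conclusions. For the colored variant, $T_m(n)$ is $K_{m+1}$-free so it is $K_{r+1}$-free is false in general, but it does contain copies of each $H_i$; however colouring \emph{every} edge with colour $1$ gives $\cN^{\mathrm{col}}=\cN(K_r,T_m(n))=\ex(n,H_1,F)$, and the displayed upper bound shows no colouring can do better, so $\cex(n,(H_1,\dots,H_k),F)=\ex(n,H_1,F)=\cN(K_r,T_m(n))$. For the uncolored variant we simply have $\ex(n,(H_1,\dots,H_k),F)=\cN(H_1,\dots,H_k;T_m(n))$, since the maximiser is $T_m(n)$ and we count all $H_i$ in it. The main obstacle I anticipate is the exactness step: Lemma~\ref{stabi} as stated gives the $\Omega(n^{r-1})$ deficiency only when $\chi(G)>m$, so for the comparison among $m$-chromatic graphs $G\neq T_m(n)$ one needs a separate (standard but slightly delicate) Turán-type stability bound of order $n^{r-1}$; getting the constants and the ``large $n$'' threshold to line up so that $\Omega(n^{r-1})$ beats the $O(n^{r-3})$ slack uniformly is the technical heart. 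The hypothesis $|V(H_i)|<r-2$ is exactly what guarantees $r-3<r-1$ with room to spare, so no delicate optimization of the lower-order terms is needed.
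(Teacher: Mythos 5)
Your overall strategy — split on whether $\chi(G)>m$ (apply Lemma~\ref{stabi}) or $\chi(G)\le m$ (reduce to a $K_{m+1}$-free comparison), and in both cases show the deficit in $K_r$-count swamps the $O(n^{r-3})$ contribution of the other $H_i$ — is exactly the paper's. However, the central quantitative claim you rely on in the second case is false. You assert that \emph{any} $m$-colourable $K_{m+1}$-free graph $G\neq T_m(n)$ satisfies $\cN(K_r,T_m(n))-\cN(K_r,G)=\Omega(n^{r-1})$, and each of the three justifications you sketch (Zykov, Ma--Qiu, imbalance or a missing cross-edge) actually yields only $\Omega(n^{r-2})$. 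For instance, deleting a single cross-edge $uv$ from $T_m(n)$ keeps the graph $m$-chromatic and $K_{m+1}$-free, but it only destroys the $K_r$'s through both $u$ and $v$, of which there are $\Theta(n^{r-2})$; likewise a complete $m$-partite graph whose part sizes are $n/m\pm 2$ loses only $\Theta(n^{r-2})$ copies of $K_r$. So the correct bound in this regime is $\Omega(n^{r-2})$, not $\Omega(n^{r-1})$ — and this is precisely why the hypothesis is $|V(H_i)|<r-2$ (forcing $O(n^{r-3})$ other copies, dominated by $n^{r-2}$), rather than the weaker $|V(H_i)|<r-1$ which your claimed bound would have allowed. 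Your closing remark that ``$r-3<r-1$ with room to spare'' reveals the same misreading: the comparison that actually matters is $r-3<r-2$, with no room to spare, which is why the hypothesis is tuned the way it is.

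The good news is that the argument still closes once the exponent is corrected: the paper first reduces the $m$-colourable case to a complete $m$-partite graph (adding edges can only help and preserves $F$-freeness), then observes that a non-Turán complete $m$-partite graph has $\cN(K_r,T_m(n))-\cN(K_r,G)=\Omega(n^{r-2})$, which still dominates $O(n^{r-3})$. So the structure of your proof is sound, but the specific lemma you invoke does not hold and would need to be replaced by the weaker (and correct) $\Omega(n^{r-2})$ bound, together with a word about why that suffices under the stated hypothesis on $|V(H_i)|$.
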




\begin{proof} Let $G$ be an $F$-free graph. If $G$ has chromatic number more than $\chi(F)-1$, then by Lemma \ref{stabi} we have $\cN(K_r,G)\le \cN(K_r,T_{\chi(F)-1}(n))-\Omega(n^{r-1})$. As there are $O(n^{r-2})$ copies of the other graphs $H_i$, we are done.

If $G$ has chromatic number at most $\chi(F)-1$, then we can assume it is a complete $(\chi(F)-1)$-partite graph. A simple calculation shows that if $G$ is not the Tur\'an graph, then we have $\cN(K_r,G)\le \cN(K_r,T_{\chi(F)-1}(n))-\Omega(n^{r-2})$. 
As there are $O(n^{r-3})$ copies of the other graphs $H_i$, we are done.
\end{proof}

Let $\cT_m^r(n)$ denote the hypergraph having the $r$-cliques of $T_m(n)$ as hyperedges.

\begin{corollary}\label{criti} Let $\chi(F)=m+1>r\ge 5$ and $F$ have a color-critical edge. If $n$ is large enough, then $\ex_r(n,\textup{Berge-}F)=|\cT_{m}^r(n)|$.
\end{corollary}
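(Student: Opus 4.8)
The plan is to combine Proposition~\ref{edgecrit} with the chain of inequalities relating Berge Tur\'an numbers to the colored variant. Recall from Section~1 that $\ex_r(n,\textup{Berge-}F)\le \cex(n,(K_r,K_2),F)$, and also that $\cex(n,(K_r,K_2),F)\ge \ex(n,K_r,F)$ by Proposition~\ref{obse}. So it suffices to (i) identify the tuple $(K_r,K_2,F)$ as a special case of Proposition~\ref{edgecrit}, and (ii) match the resulting value $\cN(K_r,T_m(n))$ with the claimed quantity $|\cT_m^r(n)|$ (which must be the number of copies of $K_r$ in $T_m(n)$ --- the notation is introduced earlier in the paper in a part not shown here, but should denote exactly this).

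First I would check the hypotheses of Proposition~\ref{edgecrit} with $H_1=K_r$, $k=2$, and $H_2=K_2$. We need $|V(H_2)|<r-2$, i.e. $2<r-2$, i.e. $r\ge 5$; this is precisely the hypothesis $r\ge 5$ in the corollary. We also need $\chi(F)=m+1>r$ and $F$ to have a color-critical edge --- both given. Hence for $n$ large enough, $\cex(n,(K_r,K_2),F)=\ex(n,K_r,F)=\cN(K_r,T_m(n))$.

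Next I would assemble the sandwich: for $n$ large,
\[
\cN(K_r,T_m(n)) \;=\; \ex(n,K_r,F) \;\le\; \ex_r(n,\textup{Berge-}F) \;\le\; \cex(n,(K_r,K_2),F) \;=\; \cN(K_r,T_m(n)),
\]
where the leftmost inequality is the lower bound $\ex(n,K_r,F)\le \ex_r(n,\textup{Berge-}F)$ from Gerbner--Palmer (stated in Section~1), the middle inequality is the F\"uredi--Kostochka--Luo / Gerbner--Methuku--Palmer bound, and the rightmost equality is Proposition~\ref{edgecrit}. Therefore all terms are equal and $\ex_r(n,\textup{Berge-}F)=\cN(K_r,T_m(n))=|\cT_m^r(n)|$, as claimed.

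I do not expect a serious obstacle here: the corollary is essentially a dictionary translation of Proposition~\ref{edgecrit} into the Berge setting via the known reductions. The one point requiring minor care is bookkeeping on the index/notation: making sure $|\cT_m^r(n)|$ is indeed $\cN(K_r,T_m(n))$ and that the "$n$ large enough" thresholds from Proposition~\ref{edgecrit} are what the corollary quotes. The role of the hypothesis $r\ge 5$ is exactly to guarantee $|V(K_2)|=2<r-2$, so that the error terms $O(n^{r-2})$ coming from counting $K_2$'s are dominated by the $\Omega(n^{r-1})$ gap in Lemma~\ref{stabi}; this should be flagged explicitly.
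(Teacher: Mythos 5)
Your derivation is correct and is exactly the intended route: sandwich $\ex_r(n,\textup{Berge-}F)$ between $\ex(n,K_r,F)$ (the Gerbner--Palmer lower bound) and $\cex(n,(K_r,K_2),F)$ (the F\"uredi--Kostochka--Luo / Gerbner--Methuku--Palmer upper bound), then apply Proposition~\ref{edgecrit} with $H_1=K_r$, $H_2=K_2$, noting that $r\ge 5$ is precisely what is needed for $|V(K_2)|=2<r-2$. The paper leaves this derivation implicit and instead remarks that the statement is already known via Alon and Pikhurko's observation (valid for all $r$, not just $r\ge 5$), but the argument you give is the one the ``Corollary'' label points to.
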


Note that the above corollary is already known, for every $r$. The $r$-uniform \textit{expansion} $F^{+r}$ of a graph $F$ is the specific $r$-uniform Berge copy that contains the most vertices, i.e. the $r-2$ vertices added to each edge of $F$ are distinct for different edges, and distinct from the vertices of $F$. Let $\cT_{m}^r(n)$ denote the complete $m$-partite $r$-uniform hypergraph on $n$ vertices with each part of order $\lfloor n/m\rfloor$ or $\lceil n/m\rceil$. Mubayi \cite{muba} proved $\ex_r(n,K_{m+1}^{+r})=(1+o(1))|\cT_{m}^r(n)|$, and Pikhurko \cite{pikhu} proved $\ex_r(n,K_{m+1}^{+r})=|\cT_{m}^r(n)|$ for $n$ large enough.
According to the survey \cite{mubver} of Mubayi and Verstra\"ete on expansions, Alon and Pikhurko observed that Pikhurko's proof generalizes to the case $F$ is $(m+1)$-chromatic with a color-critical edge, showing $\ex_r(n,F^{+r})=|\cT_{m}^r(n)|$ for $n$ large enough. It implies the same for Berge hypergraphs, which is Corollary \ref{criti}. 

Another corollary of Proposition \ref{edgecrit} is that any $(k+1)$-tuple of cliques where the order of one of the cliques is larger than the order of any other clique by at least three, is color-resistant for large enough $n$. In the next section we show that the same holds for every $n$, without the restriction on the order of the cliques.

\section{The colored variant}

\begin{theorem} Any $(k+1)$-tuple of cliques is color-resistant for every $n$, i.e. if $H_1,\dots,H_k$ are cliques, then $\cN{{\mathrm col}}(H_1,\dots,H_k;K_m)$ is maximized by a monochromatic $T_{m-1}(n)$.

\end{theorem}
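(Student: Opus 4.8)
The plan is to prove the equivalent statement that
\[
\cN^{\mathrm{col}}(H_1,\dots,H_k;G)\ \le\ \max_{i\le k}\cN(K_{r_i},T_{m-1}(n))
\]
for every $n$-vertex $K_m$-free graph $G$ with edges coloured $1,\dots,k$, where I write $H_i=K_{r_i}$ (we may assume each $r_i\ge 2$). The reverse inequality is witnessed by colouring $T_{m-1}(n)$ entirely in a colour $i$ maximising $\cN(K_{r_i},T_{m-1}(n))$, since $\ex(n,K_{r_i},K_m)=\cN(K_{r_i},T_{m-1}(n))$ by Zykov's theorem. The engine is a colour‑sensitive version of Zykov symmetrisation.

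First I would reduce to a complete multipartite host. Given non-adjacent $u,v$, let $G'$ be obtained from $G$ by deleting the edges at $v$ and joining $v$ to every $w\in N_G(u)$, giving the new edge $vw$ the colour of $uw$. This preserves $K_m$-freeness: a $K_m$ of $G'$ meeting $v$ avoids $u$, so its other $m-1$ vertices lie in $N_G(u)$ and form a $K_m$ with $u$ in $G$, a contradiction. In each colour class, cloning $v$ onto $u$ leaves untouched the copies of $K_{r_i}$ avoiding both $u$ and $v$, and no copy meets both; hence $\cN(K_{r_i},G'_i)-\cN(K_{r_i},G_i)$ equals the number of copies of $K_{r_i}$ through $u$ in $G_i$ minus the number through $v$ in $G_i$. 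Summing over $i$, the total change $\Delta(u\!\to\!v)$ satisfies $\Delta(u\!\to\!v)=-\Delta(v\!\to\!u)$, so one of the two clonings does not decrease $\cN^{\mathrm{col}}$. Iterating this (with the usual care to guarantee termination, as in Zykov's argument) yields a complete multipartite maximiser, which has at most $m-1$ parts because it is $K_m$-free.

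It remains to treat the colouring of a complete multipartite host. Applying the same symmetrisation inside each part — cloning all vertices of a part onto the vertex of that part through which the most monochromatic right‑size cliques pass — turns $G$, without decreasing $\cN^{\mathrm{col}}$, into a blow‑up of an edge‑coloured $K_t$ with $t\le m-1$. Since colour classes are pairwise edge‑disjoint, merging any two colours with $r_i=r_j$ does not decrease $\cN^{\mathrm{col}}$, so we may assume the $r_i$ are pairwise distinct. For such a blown‑up, distinctly‑sized colouring one argues — via an exchange on the colours of $K_t$, using that two counted monochromatic cliques of different colours can share at most one vertex (a shared edge would force a common colour) and that all counted cliques are transversals of the parts — that recolouring $K_t$ monochromatically in a best colour $i$ does not lose; then $\cN^{\mathrm{col}}(G)\le\cN(K_{r_i},G)\le\cN(K_{r_i},T_{m-1}(n))\le\max_j\cN(K_{r_j},T_{m-1}(n))$, the middle inequality again being Zykov's.

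The step I expect to be the real obstacle is this last one: that for a complete multipartite host a monochromatic colouring is optimal, i.e.\ $\cN^{\mathrm{col}}(G)\le\max_i\cN(K_{r_i},G)$ for complete multipartite $G$. In the uncoloured problem Zykov's theorem gives the optimality of $T_{m-1}(n)$ for free, but in the coloured setting one genuinely has to rule out a profit from distributing the host among several colours; what makes it true is the edge‑disjointness of the colour classes together with the transversal structure of cliques in a complete multipartite graph, and organising this into a clean exchange (or an induction on the number of parts, or on the number of colours, possibly after first optimising the blow‑up sizes — a tractable optimisation, being concave along transfers) is where the bulk of the work lies. A secondary, routine, point is ensuring the symmetrisation process terminates.
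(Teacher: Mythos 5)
Your overall skeleton matches the paper's: Zykov vertex-symmetrization to reduce to a complete multipartite host where each pair of parts is joined monochromatically (i.e.\ a blow-up of an edge-coloured $K_t$ with $t\le m-1$), the reduction to pairwise-distinct clique sizes by merging edge-disjoint colour classes of equal $r_i$, and then the task of showing a monochromatic colouring of the host is optimal. However, the last step --- which you yourself flag as ``where the bulk of the work lies'' --- is precisely the non-trivial content of the theorem, and your proposal does not actually supply it. Gesturing at ``an exchange on the colours of $K_t$'' using the observations that differently-coloured counted cliques share at most a vertex and that counted cliques are transversals is not an argument: those two facts alone do not obviously force the colour that maximises a single term to dominate the sum, and in particular they say nothing about \emph{which} recolouring of $K_t$ wins or why the gain exceeds the loss.

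The paper fills this gap with genuinely new machinery: a second round of \emph{class-level} symmetrisation (recolouring the bipartite layers emanating from one class to match those of another), applied first with the colour of the smallest counted clique $K_p$ playing the role of ``non-edge'' and then with a second colour $K_q$ ($q>p$). This produces a nested ``pack'' structure in which the blue $p$-cliques sit inside medium packs and red $q$-cliques span several medium packs within a large pack. The decisive counting step is then local and explicit: one finds a medium pack $\cA$ with at least $q-1$ small packs attached by red edges, and recolouring the interior of $\cA$ red destroys each blue $K_p$ inside $\cA$ while creating strictly more red $K_q$'s (extend each destroyed $K_p$ by $q-p$ vertices chosen from those $q-1$ small packs, and there is more than one way to do so). This contradicts the minimality of the number of colours. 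Your proposal has no analogue of the pack decomposition nor of this gain-versus-loss count, so as written it does not prove the theorem.

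A secondary remark: the merging reduction to distinct $r_i$ is sound (edge-disjointness of colour classes gives $\cN(K_r,G_i)+\cN(K_r,G_j)\le\cN(K_r,G_i\cup G_j)$ for $r\ge 2$), and the paper does implicitly rely on the distinctness ``$q>p$'' without stating this reduction, so making it explicit as you do is a small tidy-up. But it does not compensate for the missing core argument.
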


Note that it depends on the parameters which color gives the maximum, but obviously if $n$ is large enough, then it is the color $i$ such that $H_i$ is the largest clique.

The case $k=2$, $H_1=K_r$ and $H_2=K_2$ was proved in \cite{gmp}. Gerbner, Nagy, Patk\'os and Vizer \cite{gnpv} considered a variant, where in a $K_m$-free graph $G$ we count the blue copies of $K_r$, add $t$ times the red edges, and subtract $t-1$ times all the edges of $G$.
They proved that again a monochromatic $T_{m-1}(n)$ attains the maximum. Both of those proofs use Zykov's symmetrization method \cite{zykov} in a straightforward but involved way. We follow the steps from \cite{gnpv} in the first half of the proof. 

\begin{proof}
Let $G$ be a $K_m$-free graph with the largest value of $x(G):=\cN{{\mathrm col}}(H_1,\dots,H_k;G)$, where every $H_i$ is a clique. Among such graphs, we pick one with the smallest number of colors. For a vertex $v$, we let $d_i(v,G)$ denote the number of copies of $H_i$ in $G$ containing $v$, and let $d^*(v):=d^*(v,G):=\sum_{i=1}^k d_i(v,G)$. 

For two vertices $u$ and $v$, we say that we \textit{symmetrize} $u$ to $v$ if we delete all the
edges incident to $u$, and then for every edge $vw$, we add the edge $uw$ of the same color. We will apply this to non-adjacent vertices. It is well-known and easy to see that no $K_m$ is created this way. It is also easy to see that if $d^*(u)\le d^*(v)$, then $x(G)$ does not decrease, while if $d^*(u)<d^*(v)$, then $x(G)$ increases, which is a contradiction. Which means that when we apply such symmetrization steps, unconnected vertices always have the same $d^*$-value. 

We will change the graph with symmetrization steps to other graphs $G'$, but with an abuse of notation, we will use the same notation $d^*(v)$ for $d^*(v,G')$. It should not cause confusion, as we always deal with one graph at a time.

We will apply several symmetrization steps in the next part of the proof. In the first phase, we pick a vertex $v$.
Recall that each vertex not connected to $v$ has the same $d^*$-value. Then one by one we symmetrize to $v$ every vertex that is not connected to it. 

After this, we
obtain an independent set $A$ of vertices such that each vertex $w\not\in A$ is connected to each vertex
$u\in A$, by edges of the same color. Observe
that this property does not change in further symmetrization steps.

In the second phase we pick a vertex not in $A$ and do the same what we did in the first phase.
This way we obtain another independent set, and so on. After at most $m-1$ phases we obtain a complete multipartite graph $G'$ with at most $m-1$ classes such
that for any two of its classes, all the edges between them are of the same color, and the vertices inside a class have the same $d^*$-value. Obviously $x(G')=x(G)$. If $G$ has at least $m-1$ vertices, then we can assume that $G'$ has $m-1$ parts, as otherwise we can add edges to it, increasing the number of parts without decreasing $x(G')$. 

Let us introduce a symmetrization operation on classes of $G'$. When we symmetrize $A$ to $B$, for every third class $C$, we recolor the edges between $A$ and $C$ with the colors of the edges between $B$ and $C$. 
We mimic the previous part of the proof now, with classes of $G'$ playing the role of the vertices. However, any two classes are connected, so one of the colors will play the role of the non-edges. 
We pick the color such that the smallest clique, say $K_p$ we count is of that color. If there are no edges of that color, we pick the second smallest clique and so on. Let blue be the first color in this ordering such that there exist blue edges in $G'$ and we always symmetrize two classes connected by blue edges. 

We claim that either symmetrizing $A$ to $B$, or symmetrizing $B$ to $A$ does not decrease $x$. Consider the contribution of part $A$ to $x$, i.e., the sum of the number of copies of $H_i$ in color $i$ that contain a vertex from $A$. Let $y$ be this number without counting the blue cliques that contain vertices from both $A$ and $B$, and analogously $z$ denote the contribution of $B$ without those blue cliques. Observe that the blue cliques that contain vertices from both $A$ and $B$ remain in the graph after the symmetrization. Therefore, $x$ increases by $y-z$ or $z-y$, depending on the direction of the symmetrization, hence indeed at least one of the two symmetrizations does not decrease $x$. 

Applying such symmetrization steps repeatedly,
we obtain first a set of classes each connected by blue edges, then another set, and so on, i.e. at the end of this process, being either connected by blue edges or not connected is an equivalence relation on the vertices. Let $G''$ be the graph obtained this way.

Let $\cA$ denote an equivalence class of this relation. Then $\cA$ induces a complete multipartite graph itself. Then every vertex of $\cA$ and every vertex of another class $\cB$ are connected, with edges of the same color. 
Then we pick an arbitrary other color which actually appears in $G''$, say red, which corresponds to cliques $K_q$ (thus $q>p$). We introduce a symmetrization operation on equivalence classes, exactly the same way as earlier. For two classes $\cA$ and $\cB$ connected by red edges, and every third class $\cC$, we recolor all the edges from $\cC$ to $\cB$ to the color of the edges from $\cC$ to $\cA$ or the other way around.

Again, at the end of this process, in the resulting graph $G'''$, being unconnected or connected by red or blue edges is an equivalence relation. This means that the blue $p$-cliques are inside the equivalence classes of $G''$, and several such equivalence classes are connected by red edges, that is where we can find red $q$-cliques. Now we have three kind of classes, so we name them to help distinguish. We call the partite sets of $G'$ a \textit{small pack}, these are independent sets in each of $G'$, $G''$ and $G'''$. We call the equivalence classes of $G''$ \textit{medium packs}, these induce blue monochromatic complete multipartite graphs in $G''$ and $G'''$, where each partite set is a small pack. Finally, we call the equivalence classes of $G'''$ \textit{large packs}. These are red-blue complete multipartite graphs, where each partite set is a small pack. Large packs consist of multiple medium packs, where two vertices from different medium packs are connected by a red edge.
We remark that we could continue this procedure with other colors and obtain even larger packs, but this will suffice for us.

As there are red edges in $G'''$, there is a medium pack that is incident to red edges. If there are less than $q-1$ small packs incident to that medium pack by red edges (these are the small packs in the same large pack, but in a different medium pack), then we could recolor these red edges to blue without deleting any red $K_q$, thus without decreasing $x(G)$. Then we repeat this to the other red edges. As deleting a color would contradict our assumptions, we can find a medium pack $\cA$ such that at least $q-1$ small packs are incident to $\cA$ by red edges. Let $\cF$ denote the family of these small packs.

Let us recolor all the edges inside $\cA$ to red for every $\cA\in \cF$. Then $x(G)$ decreases by the number of blue $p$-cliques inside that medium pack, but increases by some red $q$-cliques. We need that the number of new red $q$-cliques is larger than the number of blue $p$-cliques inside $\cA$. But it is trivial, as for each blue $p$-clique inside $\cA$, we can find a new red $q$-clique by picking a vertex from each of $q-p$ small packs in $\cF$ (and for distinct $p$-cliques, we obtain distinct $q$-cliques this way). As $q-1>q-p>0$, there are more than one ways to pick the small packs, finishing the proof (even without considering that small packs may have more than one vertices).
\end{proof}




\smallskip

Let us continue with some examples for tuples that are not color-resistant. First we show an example for infinitely many $n$. Chase \cite{chase} showed that for the star $S_\ell$ with $\ell$ vertices, we have $\ex(n,K_r,S_\ell)=\cN(K_r,G)$, where $G$ consists of $\lfloor n/(\ell-1)\rfloor$ vertex disjoint copies of $K_{\ell-1}$, and a clique on the remaining vertices. Let $n=6p+2$ and consider $\cex(n,(K_3,K_2),S_7)$. Then it is obvious that taking $p$ copies of blue $K_6$, the remaining single edge should be red. This construction is clearly better than taking the same graph with only red edges and better than taking only blue edges (as the largest degree is at most 5, that would mean at most $5n/2$ blue edges). Therefore, $(K_3,K_2,S_7)$ is not color-resistant.



Let us show an example where $\cex(n,(H_1,H_2),F)>\max\{\ex(n,H_1,F),\ex(n,H_2,F)\}$ holds for every $n$ large enough.
Let $F_2$ be the \textit{2-fan}, which consists of two triangles sharing a vertex.
Erd\H os, F\"uredi, Gould, and Gunderson \cite{efgg} showed $\ex(n,F_2)=\lfloor n^2/4\rfloor+1$, where the construction is $T_2(n)$ with an arbitrary edge added. Moreover, this is the only $F_2$-free graph with that many edges. Gerbner and Palmer \cite{gp3} showed $\ex(n,C_4,F_2)=\cN(C_4,T_2(n))=|E(T_2(n))||E(T_2(n-2))|=\frac{1}{4}\lfloor n/2\rfloor\lceil n/2\rceil(\lfloor n/2\rfloor-1)(\lceil n/2\rceil-1)$.

\begin{proposition}
If $n$ is sufficiently large, then we have $\cex(n,(C_4,K_2),F_2)=|E(T_2(n))||E(T_2(n-2))|+1$.
\end{proposition}

\begin{proof}
For the lower bound, one can take a blue $T_2(n)$ and add an arbitrary edge in red. 

For the upper bound, we follow the proof of the bound on $\ex(n,C_4,F_2)$ by Gerbner and Palmer \cite{gp3}. If $G$ is an $F_2$-free graph on $n$ vertices, and it has $\lfloor n^2/4\rfloor +1$ edges, we are done by the uniqueness of the extremal graph in the result of Erd\H os, F\"uredi, Gould, and Gunderson \cite{efgg}. If $|E(G)|\le\lfloor n^2/4\rfloor$ edges, Gerbner and Palmer showed that every edge is in at most $|E(T_2(n-2))|$ copies of $C_4$. 

If we have $x$ red edges, then $\cN^{{\mathrm col}}(C_4,K_2;G)\le(|E(G)|-x)|E(T_2(n-2))|+x\le |E(G)||E(T_2(n-2))|$, completing the proof.

\end{proof}

Both the above examples were built on the same principle: we take a graph $G$ that is extremal for $\ex(n,H,F)$ and has some edges not contained in any copy of $H$. Then we can take a blue $G$ and recolor those edges to red. In the above examples, the two-colored construction was larger by one than the monocolored one, but we could easily modify the first example to obtain a larger constant difference. 

Let $S_r$ denote the star on $r$ vertices. We denote by $F(n)$ the graph we obtain from $S_n$ by adding a matching of size $\lfloor (n-1)/2\rfloor$ on the leaves of the star (thus $F_2=F(5)$). Let $F(n)^*$ be the blue-red graph with the same set of edges, where the edges of the star are blue, and the edges of the matching are red.
Gerbner \cite{gerbner2} showed that if $n$ is large enough, then for $r\ge 4$ we have $ex(n,S_r,C_4)=\binom{n-1}{r-1}=\cN(S_r,S_n)$, while $\ex(n,S_3,C_4)=\cN(S_3,F(n))$. In the case $r\ge 4$, $F(n)^*$ shows that $\cex(n,(S_r,K_2),C_4)\ge \max \{\ex(n,S_r,C_4),\ex(n,K_2,C_4)\}+\lfloor (n-1)/2\rfloor$ if $n$ is large enough. This shows the difference can be linear, but we can do better by taking red matchings instead of red edges.


Let $M_t$ denote the matching with $t$ edges. It was shown in \cite{gerbner2} that $\ex(n,M_t,C_4)=(1+o(1))\ex(n,C_4)^t/t!=\Theta(n^{3t/2})$. Let us describe the simple argument here, as we will use it below. In fact, any graph with $f(n)=\omega(n)$ edges contains $(1+o(1))f(n)^t/t!$ copies of $M_t$. The upper bound follows from the fact that we pick $t$ edges, each at most $f(n)$ ways, and we count each copy of $M_t$ exactly $t!$ times. For the lower bound, we again pick the edges one by one. Observe that each time we can pick at least $f(n)-2(t-1)n=(1-o(1))f(n)$ edges, as we only have to subtract those that are incident to a previously picked edge.

Thus if $t<2(r-1)/3$ and $n$ is large enough, then $\ex(n,S_r,C_4)\}\ge\ex(n,M_t,C_4)$. Therefore, $F(n)^*$ shows $\cex(n,(S_r,M_t),C_4)\ge \max \{\ex(n,S_r,C_4),\ex(n,M_t,C_4)\}+ \binom{\lfloor(n-1)/2\rfloor}{t}$ if $2\le t<2(r-1)/3$ and $n$ is large enough, hence
the difference can be above any polynomial.

Let us examine this example in more detail. Observe that determining $\cex(n,(S_r,M_t),C_4)$ completely for every $r$ and $t$ and large enough $n$ would include the case $r=2$, $t=1$, which is $\ex(n,C_4)$. Despite significant effort by many researchers, this problems is still unsolved. However, in every case we can either determine $\cex(n,(S_r,M_t),C_4)$, or show that a red monochromatic graph gives the maximum.

\begin{theorem} We have
\begin{displaymath}
\cex(n,(S_r,M_t),C_4)=
\left\{ \begin{array}{l l}
\cN(S_3,F(n)) & \textrm{if\/ $r=3$ and $t=1$},\\
\cN^{{\mathrm col}}(S_r,M_t;F(n)^*) & \textrm{if\/ $r\ge 4$, $t<2(r-1)/3$ and $n$ is large enough},\\
\cN^{{\mathrm col}}(S_r,M_t;F(n)^*) & \textrm{if\/  $r=4$, $t=2$ and $n$ is large enough},\\
\ex(n,M_t,C_4) & \textrm{for other values of $t$ and $r$, if $n$ is large enough}.
\\
\end{array}
\right.
\end{displaymath}
\end{theorem}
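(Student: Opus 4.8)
The plan is to handle the four cases separately, relying on the structural results for $\ex(n,S_r,C_4)$ from \cite{gerbner2} and the counting argument for matchings recalled above. Throughout, let $G$ be a $C_4$-free graph on $n$ vertices whose edges are colored blue and red, achieving the maximum, and write $G_1$ for the blue subgraph and $G_2$ for the red subgraph. The two extreme regimes (many blue stars versus many red matchings) are separated by the threshold $t=2(r-1)/3$; the delicate part is the boundary cases and the claim that in the ``other'' regime a monored graph is optimal.

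First I would dispose of the regime $r\ge 4$, $t<2(r-1)/3$ (and likewise the isolated case $r=4$, $t=2$, which I expect to behave the same way). Here $\ex(n,S_r,C_4)=\binom{n-1}{r-1}=\Theta(n^{r-1})$ dominates $\ex(n,M_t,C_4)=\Theta(n^{3t/2})$ since $3t/2<r-1$. I would invoke the stability in \cite{gerbner2}: a $C_4$-free graph with close to $\binom{n-1}{r-1}$ copies of $S_r$ must essentially be $S_n$ (a near-spanning star plus a bounded-degree leftover). Quantitatively, if $G_1$ has $\binom{n-1}{r-1}-x$ blue copies of $S_r$, then $G_1$ misses $\Omega(x/n^{r-2})$ of the "star" edges, and each such missing edge kills $\Omega(n^{r-2})$ copies of $S_r$ while the red edges can only contribute $O(n^{3t/2})=o(n^{r-1})$ copies of $M_t$ in total. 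So the optimizer has blue subgraph a spanning star, and the remaining edges (a matching on the leaves, by $C_4$-freeness) are best colored red; this is exactly $F_n^*$, since $\binom{\lfloor(n-1)/2\rfloor}{t}$ red copies of $M_t$ beat the handful of blue $S_r$'s those edges would give. The $r=3$, $t=1$ case is the analogous statement for $S_3$: here $\ex(n,S_3,C_4)=\cN(S_3,F_n)$ already, and since $F_n$ itself has an edge not in any $S_3$-... wait, every edge of $F_n$ is in an $S_3$, so here one checks directly by the extremal structure that recoloring cannot help, giving $\cN(S_3,M_1;F_n)$.

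The main work is the ``other values'' regime, i.e. $t\ge 2(r-1)/3$ (so in particular $r=2$ is allowed, or $r=3$ with $t\ge 2$, or larger $r$ with $t$ large), where $\ex(n,M_t,C_4)=\Theta(n^{3t/2})$ dominates. I would argue that the optimizer is monored: suppose $G$ has $b$ blue edges. By the matching-counting lemma, the red subgraph has at most $(1+o(1))|E(G_2)|^t/t!$ copies of $M_t$, and the total number of edges is at most $\ex(n,C_4)=(1+o(1))\tfrac12 n^{3/2}$; moreover the blue copies of $S_r$ number at most $\binom{n-1}{r-1}$. So $\cN^{\mathrm{col}}(S_r,M_t;G)\le \binom{n-1}{r-1}+(1+o(1))(\ex(n,C_4))^t/t!$, but the first term is $O(n^{r-1})=o(n^{3t/2})$ in this regime, so it is absorbed into the error of the second. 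Since recoloring all edges of an extremal $C_4$-free graph red gives $(1+o(1))(\ex(n,C_4))^t/t!=\ex(n,M_t,C_4)$ copies of $M_t$, the conclusion $\cex(n,(S_r,M_t),C_4)=\ex(n,M_t,C_4)$ follows. The subtlety to get right is that ``$\ex(n,M_t,C_4)$'' on the right is an \emph{exact} quantity, so one must check that the $o(n^{3t/2})$ gain from a few blue edges genuinely cannot push past the true maximum of $\cN(M_t,\cdot)$ over $C_4$-free graphs; this requires knowing (from \cite{gerbner2}) that the extremal $C_4$-free graph for $M_t$ is not one with many edges lying in no copy of $M_t$ — here essentially every edge of a dense $C_4$-free graph lies in many $M_t$'s, so bluing even one edge strictly loses.

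The expected obstacle is precisely the exactness in the last case and the boundary value $t=2(r-1)/3$ split across $r=4,t=2$ versus ``other'': one must pin down, for $n$ large, whether the blue-star contribution is strictly smaller than the red-matching contribution, which forces a careful comparison of $\binom{n-1}{r-1}$ against $\binom{\lfloor n/2\rfloor}{t}$ plus lower-order corrections, and a verification that the stability neighborhoods of $S_n$ (for $S_r$) and of the $C_4$-extremal graph (for $M_t$) are incompatible, so no hybrid construction wins. I would structure the write-up as: (i) the dominance bookkeeping via Proposition~\ref{obse} and the matching count; (ii) a stability lemma for $\ex(n,S_r,C_4)$ extracted from \cite{gerbner2}; (iii) the case analysis above; (iv) an explicit evaluation of $\cN^{\mathrm col}(S_r,M_t;F_n^*)$ to match the stated formula.
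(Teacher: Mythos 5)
Your high-level bookkeeping (which quantity dominates in which regime) is correct, but the proposal has several genuine gaps exactly at the points where the theorem is hard.

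First, you invoke a quantitative ``stability lemma for $\ex(n,S_r,C_4)$'' from \cite{gerbner2} (a $C_4$-free graph with nearly $\binom{n-1}{r-1}$ copies of $S_r$ is nearly a spanning star), but no such statement is established there, and you do not prove one. The paper avoids stability entirely for this case: it counts blue $S_r$'s by choosing a pair of leaves (using $C_4$-freeness to say they have at most one common neighbor, hence at most $\binom{\Delta-2}{r-3}$ ways to complete), getting the bound $\binom{n}{2}\binom{\Delta-2}{r-3}/\binom{r-1}{2}$ where $\Delta$ is the maximum blue degree, and then runs a short case analysis on $\Delta\in\{n-1,n-2,\le n-3\}$. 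Your plan could perhaps be made to work, but as written it rests on an unproved lemma.

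Second, the $r=3$, $t=1$ case is entirely waved away (``one checks directly by the extremal structure that recoloring cannot help''). In fact this case needs real work; the paper proves it by induction on $n$, with a recoloring argument that pushes red components to be trees, then replaces red tree components by blue copies of $F_m^*$, and finally reduces to $n-2$ when a single isolated red edge remains. Nothing in your sketch substitutes for this.

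Third, and most seriously, your ``other'' regime argument does not prove the stated exact equality, and you acknowledge this without fixing it. At the boundary $t=2(r-1)/3$ the two quantities $\binom{n-1}{r-1}$ and $\ex(n,M_t,C_4)$ have the \emph{same} order $n^{3t/2}$, so the blue contribution is not $o(\ex(n,M_t,C_4))$ and cannot be absorbed into the $(1+o(1))$ error. The paper's treatment is substantially harder: it shows that any blue edge would have to lie in a blue star of order $\Omega(n^{3/4})$ (and, after more work, $\Theta(n)$ with $t=2(r-1)/3$); it then deletes the largest such star and its incident edges and adds a red $C_4$-free graph on those vertices, and compares the red $M_t$'s gained against the blue $S_r$'s and red $M_t$'s lost via careful counting. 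None of this local-improvement machinery appears in your sketch. The same boundary issue affects $r=4$, $t=2$, which you lump in with the generic $r\ge 4$ case, but there $3t/2=r-1$ so the simple dominance argument fails and the paper needs an additional, separate argument.
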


\begin{proof} Let $G$ be an $n$-vertex blue-red $C_4$-free graph which contains the most blue $S_r$ plus red $M_t$. Assume first that $r=3$ and $t=1$. In this case we use induction on $n$, the base cases $n= 3$ and $n=4$ are trivial. Let us asume $n\ge 5$. If there is a red edge $uv$, we can recolor it to blue, decreasing the number of red edges by one, and increasing the number of blue $S_3$'s by at least one, unless both $u$ and $v$ have no blue edge incident to them. Moreover, if there is a red cycle, we can recolor its edges, and again the number of blue $S_3$'s increases by at least the number of deleted red edges. 

Therefore, after executing this recoloring for every red edge and red cycle, each connected component is monochromatic in the resulting graph $G'$, and the red components are trees. By replacing a red component on $m\ge 3$ vertices by a blue $F(m)$, we delete $m-1$ red edges and add more than $m-1$ blue $S_3$'s, a contradiction. If there are $\ell>1$ red edges, each is a component, then we replace them with a blue $F(2\ell)$ to obtain a contradiction. Finally, if there is a single red edge, then
$\cN^{{\mathrm col}}(S_3,M_1;G)\le 1+\cex(n-2,(S_3,M_1),C_4)=1+\binom{n-3}{2}+2\lfloor (n-3)/2\rfloor$ by the induction hypothesis, which finishes the proof.

\smallskip

Assume now that $r\ge 4$ and $t<2(r-1)/3$. Let $\Delta$ be the largest blue degree in $G$, i.e. the largest $d_{blue}(v)$, where $d_{blue}(v)$ is the number of blue edges incident to $v$. We count the blue stars the following way. We pick two vertices $u$ and $v$, $\binom{n}{2}$ ways. They have at most one common neighbor $w$. We count the blue copies of $S_r$ containing $u$ and $v$ as leaves. There are at most $\binom{d_{blue}(w)-2}{r-3}\le\binom{\Delta-2}{r-3}$ such copies. If we count these for every pair of vertices, we count every blue copy of $S_r$ exactly $\binom{r-1}{2}$ times. Therefore, there are at most $\binom{n}{2}\binom{\Delta-2}{r-3}/\binom{r-1}{2}$ blue copies of $S_r$, and $O(n^{3t/2})=o(n^{r-1})$ red copies of $M_t$ in $G$. If $\Delta\le n-3$, then the sum of these two quantities is less than $\binom{n-1}{r-1}$, finishing the proof in this case. 

If $\Delta=n-2$, then let $x$ be a vertex of degree $n-2$ and $y$ be the only vertex not joined to $x$ by a blue edge. As $x$ and $y$ have at most one common neighbor $z$, we know that $y$ has degree at most 2. Observe that $z$ has degree at most three, and vertices different from $x,y,z$ have degree at most two. This shows that the number of blue $S_r$'s is at most $\binom{n-2}{r-1}+1$, where we have the plus 1 only if $r=4$ and $yz$ is a blue edge. Observe that we have at most $\lfloor (n-2)/2\rfloor$ edges between the neighbors of $x$, and those plus potentially $yz$ and $xy$ are the only red edges. Therefore, $\cN^{{\mathrm col}}(S_r,M_t;G) \le \binom{n-2}{r-1}+1+\binom{n/2+1}{t}<\binom{n-1}{r-1}+\binom{\lfloor (n-1)/2\rfloor}{t}$, finishing the proof in this case.

If $\Delta=n-1$ and $u$ has degree $n-1$, then there can only be independent edges in its neighborhood. Those edges cannot appear in any blue $S_r$, thus we have at most $\binom{n-1}{r-1}$ blue $S_r$'s, and the additional matching can contain at most $\binom{\lfloor (n-1)/2\rfloor}{t}$ red $M_t$'s.

\smallskip

Assume now $r=4$ and $t=2$. The same calculation as above shows that there are at most $\binom{n}{2}(\Delta-2)/3$ blue copies of $S_4$. 
Let $x$ be a vertex of degree $\Delta$. Let $a=n-\Delta-1$, and $A$ be the set of $a$ vertices not connected to and different from $x$. If $a=0$, then we are done. Observe that every other vertex is connected to at most one neighbor of $x$, thus there are $O(n)$ edges incident to $x$ or its neighbors. This shows that $|E(G)|\le (1+o(1))\frac{1}{2}(n-\Delta)^{3/2}+O(n)$, thus there are at most $|E(G)|^2/2\le (1+o(1))(n-\Delta)^3/8+O(n^2)$ red copies of $M_2$. If $a=\Omega(n)$, then we have $\cN^{{\mathrm col}}(S_4,M_2;G)\le \binom{n}{2}(\Delta-2)/3+(1+o(1))(n-\Delta)^3/8+O(n^2)<\binom{n-1}{3}\le \cN^{{\mathrm col}}(S_4,M_2;F(n)^*)$, a contradiction. 

Hence we can assume that $a=o(n)$. Observe that every vertex $y\neq x$ has degree at most $a+2$. Indeed, $y$ is connected to at most one neighbor of $x$. The $a$ vertices in $A$ are incident to $O(a^{3/2})$ edges, as each of them is connected to at most one neighbor of $x$, and there are $O(a^{3/2})$ edges inside $A$ because of the $C_4$-free property. There are $O(n+a^{3/2})$ red edges altogether, thus there are $O(a^{3/2}n+a^3)=o(an^2)$ red copies of $M_2$ containing a vertex from $A$. We claim that there are $o(an^2)$ blue copies of $S_4$ containing a vertex from $A$. 
Indeed, the number of copies totally inside $A$ is $O(a^3)$. Otherwise we have to pick one of the $a$ edges connecting $A$ to $V(G)\setminus A$, and then two more neighbors of one of the endpoints of that edge.

Let us delete all the edges incident to $A$. By the above, we deleted $o(an^2)$ red copies of $M_2$ and blue copies of $S_4$. Then we connect each vertex of $A$ to $x$ by a blue edge, creating $\Omega(an^2)$ new blue copies of $S_4$. As the resulting graph is $C_4$-free, this is a contradiction.

\medskip

Assume now that $t\ge 2(r-1)/3$. Observe that in this case $\ex(n,M_t,C_4)=(1+o(1))\frac{n^{3t/2}}{t!2^t}$ and $\ex(n,S_r,C_4)=(1+o(1))\frac{n^{r-1}}{(r-1)!}$. In the case $t>2(r-1)/3$, the first has a larger order of magnitude. In the case $t=2(r-1)/3$ and $r>4$, they have the same order of magnitude, but the constant factor is larger for the first one. Moreover, recall that $G$ contains at most $|E(G)|^t$ copies of $M_t$. If $G$ has $o(n^{3/2})$ red edges, then we are done, since it has $o(\ex(n,M_t,C_4))$ red copies of $M_t$, and the number of copies of $S_r$'s is less than $(1-c)\frac{n^{3t/2}}{t!2^t}$ for some constant $c>0$. So we may assume that $G$ has $o(n^{3/2})$ red edges from now.

We will show that $G$ is red monochromatic.
Assume indirectly that $G$ contains a blue edge $uv$.  First we show that there is a blue star of size $\Omega(n^{3/4})$ and $t$ is close to $2(r-1)/3$. Observe that $uv$ is in at most $\binom{d(u)-1}{r-2}+\binom{d(v)-1}{r-2}$ (blue) copies of $S_r$. On the other hand, there are $\Theta(n^{3/2})$ red edges in $G$, thus $\Theta(n^{3/2})$ red edges in $G$ are independent from $uv$, hence there are $\Theta(n^{3(t-1)/2})$ red copies of $M_{t-1}$ in $G$ that extend to an $M_t$ with $uv$. Thus, recoloring $uv$ to red increases $\cN^{{\mathrm col}}(S_r,M_t;G)$ in the case $\binom{d(u)-1}{r-2}+\binom{d(v)-1}{r-2}=o(n^{3(t-1)/2})$, in particular if $r-2<3(t-1)/2$ or if both $d(u)$ and $d(v)$ are $o(n^{\frac{3t-3}{2r-4}})$.
This shows that in fact every blue edge must be contained in a blue star $S_q$ with $q=\Omega(n^{\frac{3t-3}{2r-4}})=\Omega(n^{\frac{2r-5}{2r-4}})=\Omega(n^{3/4})$. 

Now we will show that there is a star of size $\Theta(n)$ and $t= 2(r-1)/3$.
Let $u_1,\dots,u_\ell$ be the centers of blue stars with at least $q$ vertices. We claim that $\ell=O(n^{1/4})$. Indeed, if we go through those centers in an arbitrary order, the first star has at least $q$ vertices, the second star contains at least $q-2$ vertices not in the first star, the third star contains at least $q-4$ vertices not in the first two stars, and so on. This shows the number of vertices in those stars is at least $q\ell-\sum_{i=1}^\ell 2i$, but at most $n$, which proves the claimed upper bound.

Let $q_i$ be the order of the blue star with center $u_i$, i.e. $q_i=d_{blue}(u_i)+1$. Without loss of generality, let $q_1\ge q_i$ for every $i$, and let $v_1,\dots, v_{q_1-1}$ be the vertices connected to $u_1$ by a blue edge. Observe that $v_i$ and $v_j$ does not have any common neighbor besides $u_1$, and $v_i$ has at most one neighbor among the $v_j$s, thus $\sum_{i=1}^{q_1-1} d(v_i)\le n+q_1-2$. The $q_1-1$ edges of this $S_{q_1}$ 
are contained altogether in at most $\binom{q_1-1}{r-1}+\sum_{i=1}^{q_1-1} \binom{d_{blue}(v_i)-1}{r-2}$ blue copies of $S_r$. Given that $\sum_{i=1}^{q_1-1} (d_{blue}(v_i)-1)\le n$ and $d_{blue}(v_i)\le q_1$, it is easy to see that $\sum_{i=1}^{q_1-1} \binom{d_{blue}(v_i)-1}{r-2}\le \frac{n}{q_1-1}\binom{q_1-1}{r-2}=O(nq_1^{r-3})=o(q_1^{r-1})$, where we use $q_1=\Omega(n^{3/4})$, hence there are  at most $(1+o(1))\binom{q_1-1}{r-1}$ blue copies of $S_r$ containing at least one of those edges. 

Let us now delete all the edges incident to any of $u_1,v_1,\dots,v_{q_1-1}$ to obtain $G'$. By the above, we deleted at most $(1+o(1))\binom{q_1-1}{r-1}$ blue copies of $S_r$.  On the other hand, we deleted at most $n+q_1$ edges, thus $O(n^{\frac{3(t-1)}{2}+1})$ red copies of $M_t$. Now we add a red $C_4$-free graph $G''$ with $\ex(q_1,C_4)=(1+o(1))q_1^{3/2}/2$ edges on these $q_1$ vertices $u_1,v_1,\dots,v_{q_1-1}$. The resulting graph $G'''$ is obviously $C_4$-free, since it consists of two $C_4$-free components. Let us consider the red copies of $M_t$ that are in $G'''$ but not in $G$. We can pick an edge from $G''$ and a red $M_{t-1}$ from $G'$. There are $\Theta(q_1^{3/2}n^{3(t-1)/2})$ ways to do this. Indeed, we have shown $G$ has $\Theta(n^{3/2})$ red edges, and then so does $G'$, as we deleted $O(n)$ red edges. Observe that $q_1^{3/2}=\Omega(n^{9/8})$, thus the number of deleted red copies of $M_t$ is $o(q_1^{3/2}n^{3(t-1)/2})$. The number of deleted blue copies of $S_r$ is $(1+o(1))\binom{q_1-1}{r-1}=O(q_1^{3/2}q_1^{3(t-1)/2})$. This shows that if $q_1=o(n)$ or $t>2(r-1)/3$, then we added more red copies of $M_t$ then the number of deleted red copies of $M_t$ and blue copies of $S_r$, a contradiction.

Let us now assume $q_1=\Theta(n)$ and $t=2(r-1)/3$ and delete all the vertices incident to any blue edge, to obtain $G_1$. We do it by going through the stars with centers $u_i$ as $i$ increases. For each star, any vertex is incident to at most one of its leaves, thus we delete $O(n)$ edges each time, thus altogether $O(n^{5/4})$ edges. This shows that we deleted $o(n^{3t/2})$ red copies of $M_t$. Let $q'$ be the number of vertices deleted and $q''=\ell+\sum_{i=1}^\ell (q_i-1)$. Then $q'\le q''\le q'+\binom{\ell}{2}$, since any two of the $\ell$ stars $S_{q_i}$ share at most one leaf. Let us consider the blue stars deleted. There are $\sum_{i=1}^\ell \binom{(q_i-1)}{r-1}\le \binom{q''-1}{r-1}$ copies with center $u_i$ for some $i$. For every other blue star, its leaves are among the $u_i$'s, thus there are at most $n\binom{\ell}{r-1}=o(n^{3t/2})$ such copies.

This way we obtained 

\begin{align*}
  \cN^{{\mathrm col}}(S_r,M_t;G)\le (1+o(1))\binom{q''-1}{r-1}+\cN^{{\mathrm col}}(S_r,M_t;G_1)=(1+o(1))\binom{q'-1}{r-1}+\cN^{{\mathrm col}}(S_r,M_t;G_1) \\
   =(1+o(1))\binom{q'-1}{r-1}+\cN(M_t,G_1)\le (1+o(1))\binom{q'-1}{r-1}+\ex(n-q',M_t,C_4) \\
  =(1+o(1))\binom{q'-1}{r-1}+(1+o(1))\frac{(n-q')^{3t/2}}{t!2^t}=(1+o(1))(\frac{q'^{r-1}}{(r-1)!}+\frac{(n-q')^{r-1}}{t!2^t}).   
\end{align*}

As $q'=\Theta(n)$, this is asymptotically smaller than \[(1+o(1))(\frac{q'^{r-1}}{t!2^t}+\frac{(n-q')^{r-1}}{t!2^t})\le (1+o(1))\frac{n^{r-1}}{t!2^t}=\ex(n,M_t,C_4),\] a contradiction finishing the proof.
\end{proof}

Let us remark that in the case of two colors, there is a natural way to improve the trivial lower bound $\ex(n,H_i,F)$ on $\cex(n,(H_1,H_2),F)$. We take an $n$-vertex $F$-free graph with $\ex(n,H_1,F)$ copies of $H_1$, and consider the unused edges, those that are not contained in any copy of $H_1$. We color those edges red, and the other edges blue (and we can do the same for $H_2$). In the case of more colors, the same approach can also give an improvement, but it is not obvious how to color the unused edges.

All the examples above are of this type, thus one could think this lower bound might be always sharp. However, we can modify the first example to show that this is not the case. Consider $\cex(8p+5,(K_4,K_3),S_9)$. Then the extremal construction for both $\ex(8p+5,K_4,S_9)$ and $\ex(8p+5,K_3,S_9)$ consists of $p$ copies of $K_8$ and one copy of $K_5$, and there are no unused edges, thus the lower bound is given by a monochromatic (in fact, blue monochromatic) graph. On the other hand, it is obvious that the $K_5$ should be red, and the $K_8$'s should be blue to maximize the number of blue $K_4$'s and red $K_3$'s.


\section{The uncolored variant}

First we show that an analogue of Conjecture \ref{mainconj} does not hold in the uncolored case. More precisely, we show examples such that for none of the $F$-free graphs $G$ with $\cN(H_i,G)=(1+o(1))\ex(n,H_i,F)$ for some $i$ have $\cN(H_1,\dots,H_k;G)=(1+o(1))\ex(n,(H_1,\dots,H_k),F)$.

Let us consider $\ex(n,(K_{a,b},K_{s,t}),K_3)$. As we have mentioned, this is attained by a complete bipartite graph $G$ due to the result of Schelp and Thomason, but not necessarily a balanced one. In fact, if $a=b$, $s=1$ and $t=2a-1$, then the complete bipartite graph $G'$ with the most copies of $K_{a,b}$ is balanced, while the complete bipartite graph $G''$ with the most copies of $K_{s,t}$ is very unbalanced. It is not surprising that $G$ must be between $G'$ and $G''$. It was observed by Brown and Sidorenko \cite{brosid} that the maximum number of copies of $K_{s,t}$ in bipartite graphs is obtained in $K_{m,n-m}$ with $m=(1+o(1))p$, where $p$ is the maximum of $x^s(1-x)^t+x^t(1-x)^s$ on $[0,1]$. It is not hard to see that to count both $K_{a,b}$ and $K_{s,t}$, we have to maximize $x^s(1-x)^t+x^t(1-x)^s+x^a(1-x)^b+x^b(1-x)^a$.  For $\ex(n,(K_{3,3},K_{1,5}),K_3)$ a simple calculation shows that indeed, $G$ is very far from both $G'$ and $G''$, the larger part is of order rougly $0.78n$ in $G$, $n/2$ in $G'$ and $0.83$ in $G''$.

\bigskip

One of the main conjectures (Erd\H os \cite{erdos}) of generalized Tur\'an problems was that the largest number of pentagons among triangle-free graphs is in the balanced blow-up of the pentagon. It was proved in \cite{G2012,HaETAL}.
Here we study what happens if we count another graph as well. We pick some other five-vertex graphs, so that there can be $\Theta(n^5)$ copies of them in $K_3$-free graphs. Also, the extremal graph for many of them are very different from the blow-up of the pentagon. 

We will use a result of Gy\H ori, Pach and Simonovits \cite{gypl}. They showed $\ex(n,P_\ell,K_3)=\cN(P_\ell,T_2(n))$.
Let $M$ be the five-vertex graph consisting of two independent edges and an independent vertex, and $M'$ be the graph consisting of a $P_3$ and an independent edge. 

\begin{proposition}\label{ujjj}
$\ex(n,M,K_3)=\cN(M,T_2(n))$ and $\ex(n,M',K_3)=\cN(M',T_2(n))$.
\end{proposition}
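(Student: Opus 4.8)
The plan is to prove both equalities by the same device: write the number of copies of $M$ (respectively $M'$) as a sum over the edges $uv$ of $G$ of a generalized Tur\'an count inside $G-u-v$, then bound each summand by its extremal value on $n-2$ vertices while bounding $e(G)\le\ex(n,K_3)=\floor{n^2/4}$ because $G$ is triangle-free. What makes the two bounds match the Tur\'an graph is the trivial observation that for \emph{every} edge $uv$ of $T_2(n)$ one has $T_2(n)-u-v\cong T_2(n-2)$, immediate from the definition of the Tur\'an graph for both parities of $n$. Throughout one may assume $n\ge 5$, since $M$ and $M'$ have five vertices and all four quantities vanish otherwise; and the argument also gives uniqueness of the extremal graph, since equality forces $e(G)=\floor{n^2/4}$, hence $G=T_2(n)$ by the uniqueness of the extremal graph for $\ex(n,K_3)$.

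For $M=2K_2\cup K_1$: a copy of $M$ is a pair of vertex-disjoint edges $\{e,f\}$ of $G$ together with one further vertex outside $e\cup f$, so $\cN(M,G)=(n-4)\nu_2(G)$, where $\nu_2(G)$ denotes the number of $2$-matchings of $G$. Counting each $2$-matching once for each of its two edges, $\nu_2(G)=\tfrac12\sum_{uv\in E(G)}e(G-u-v)$, since the edges of $G$ disjoint from $uv$ are exactly the edges of $G-u-v$. As $G-u-v$ is triangle-free on $n-2$ vertices, $e(G-u-v)\le\floor{(n-2)^2/4}$, and $e(G)\le\floor{n^2/4}$; hence $\cN(M,G)\le\frac{n-4}{2}\floor{n^2/4}\floor{(n-2)^2/4}$. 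Since $T_2(n)-u-v\cong T_2(n-2)$ has exactly $\floor{(n-2)^2/4}$ edges for every edge $uv$ of $T_2(n)$, this upper bound equals $\cN(M,T_2(n))$, which gives the first equality.

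For $M'=P_3\cup K_2$: in a copy of $M'$ the unique vertex of degree $2$ is forced to be the centre of the $P_3$, so a copy of $M'$ determines its $P_3$-part and its disjoint edge uniquely; organizing the copies by that edge yields $\cN(M',G)=\sum_{uv\in E(G)}\cN(P_3,G-u-v)$. Now $G-u-v$ is triangle-free on $n-2$ vertices, so by the result of Gy\H ori, Pach and Simonovits, $\cN(P_3,G-u-v)\le\ex(n-2,P_3,K_3)=\cN(P_3,T_2(n-2))$. Combining this with $e(G)\le\floor{n^2/4}$ gives $\cN(M',G)\le\floor{n^2/4}\,\cN(P_3,T_2(n-2))$, and since $T_2(n)-u-v\cong T_2(n-2)$ for every edge $uv$ of $T_2(n)$, the right-hand side equals $\cN(M',T_2(n))$, proving the second equality.

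I do not expect a genuine obstacle here. The two points needing care are the counting identities --- especially checking that the decomposition of a copy of $M'$ into its $P_3$ and its disjoint $K_2$ is truly unique, so that no copy is over- or under-counted in the edge sum --- and the short parity check that $T_2(n)-u-v\cong T_2(n-2)$, which is exactly what upgrades the per-edge upper bounds to equalities attained by $T_2(n)$. Both are routine; the only external input is the Gy\H ori--Pach--Simonovits equality $\ex(m,P_3,K_3)=\cN(P_3,T_2(m))$ quoted above, applied with $m=n-2$.
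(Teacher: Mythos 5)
Your proof is correct and takes essentially the same approach as the paper: both count copies of $M$ (resp.\ $M'$) by fixing one edge and then bounding the number of ways to place the remaining structure (a second disjoint edge and a fifth vertex, resp.\ a disjoint $P_3$ via the Gy\H ori--Pach--Simonovits bound on $n-2$ vertices), with the Tur\'an graph attaining equality because $T_2(n)-u-v\cong T_2(n-2)$ for every edge $uv$. You are somewhat more explicit in routing the second factor through $G-u-v$ on $n-2$ vertices, which is the cleaner way to see that the bound is tight on $T_2(n)$.
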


\begin{proof}
When counting $M$, we pick an edge at most $|E(T_2(n))|$ ways, we pick another, independent edge at most $|E(T_2(n-2))|$ ways and then a fifth vertex at most $n-4$ ways. We have equality everywhere in the Tur\'an graph.
When counting $M'$, we pick an edge at most $|E(T_2(n))|$ ways, and then an independent copy of $P_3$ at most $\ex(n-2,P_3,K_3)=\cN(P_3,T_2(n-2))$ ways (using the result of Gy\H ori, Pach and Simonovits \cite{gypl} mentioned above). Again, we have equality everywhere in the Tur\'an graph.
\end{proof}

Let $C_4'$ be the graph obtained by joining a vertex to one of the vertices of a $C_4$.
We will also consider the path $P_5$. Observe first that the colored variant is trivial: every copy of $C_5$ contains five copies of $P_5$ and every copy of $P_5$ is counted at most once. Therefore, recoloring the edges of the color corresponding to the $C_5$ increases the total number, hence $(P_5,C_5,K_3)$ is color-resistant. We show that this 3-tuple is also resistant.

\begin{proposition}

$\ex(n,(P_5,C_5),K_3)=\cN(P_5,C_5;T_2(n))=\cN(P_5,T_2(n))$ and $\ex(n,(C_4',C_5),K_3)=\cN(C_4',C_5;T_2(n))=\cN(C_4',T_2(n))$.
\end{proposition}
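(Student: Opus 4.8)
The plan is to reduce each identity to an extremal inequality on triangle‑free graphs. Since $T_2(n)$ is bipartite it contains no copy of $C_5$, so $\cN(C_5,T_2(n))=0$; this already gives the middle equalities $\cN(P_5,C_5;T_2(n))=\cN(P_5,T_2(n))$ and $\cN(C_4',C_5;T_2(n))=\cN(C_4',T_2(n))$, and the inequalities $\ex(n,(P_5,C_5),K_3)\ge\cN(P_5,T_2(n))$ and $\ex(n,(C_4',C_5),K_3)\ge\cN(C_4',T_2(n))$ are witnessed by $T_2(n)$ itself. It remains to prove that every $n$‑vertex triangle‑free graph $G$ satisfies
\[\cN(P_5,G)+\cN(C_5,G)\le\cN(P_5,T_2(n))\quad\text{and}\quad\cN(C_4',G)+\cN(C_5,G)\le\cN(C_4',T_2(n)),\]
which are strengthenings of $\cN(P_5,G)\le\cN(P_5,T_2(n))$ (Győri–Pach–Simonovits) and of $\cN(C_4',G)\le\cN(C_4',T_2(n))$: the extra term is the number of copies of $C_5$ that $G$ has and $T_2(n)$ does not.

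For the first inequality I would rerun the Győri–Pach–Simonovits count of $P_5$'s while tracking the $C_5$'s in parallel, organising the count by the central sub‑path $v_2v_3v_4$. For a fixed ordered triple with $v_2\sim v_3\sim v_4$, the number of extensions to an ordered $P_5$ is $(d(v_2)-1)(d(v_4)-1)-(m-1)$ with $m=|N(v_2)\cap N(v_4)|$ (choose $v_1\in N(v_2)\setminus\{v_3\}$ and $v_5\in N(v_4)\setminus\{v_3\}$ distinct; triangle‑freeness makes the remaining distinctness conditions automatic), and the number of these with $v_1\sim v_5$ counts ordered $C_5$'s; summing over all central triples gives $2\cN(P_5,G)$ and $10\cN(C_5,G)$. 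The structural inputs are that in a triangle‑free graph every neighbourhood is independent and $d(u)+d(v)\le n$ for every edge $uv$, so $d(v_2),d(v_4)\le n-d(v_3)$, and the pairs counted by the $C_5$ term run between the two independent sets $N(v_2)\setminus N(v_4)$ and $N(v_4)\setminus N(v_2)$. In $T_2(n)$ every central triple contributes no such pair (the endpoints $v_1,v_5$ lie in one part), so one must show the $C_5$ gain is dominated by the matching loss in the $P_5$ count; after summing over central triples and optimising over degree sequences (as in GPS) the balanced Turán graph should remain the maximiser.

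For the $C_4'$ inequality the base input is $\ex(n,C_4',K_3)=\cN(C_4',T_2(n))$, which reduces cleanly to $\ex(n,C_4,K_3)=\cN(C_4,T_2(n))$: a copy of $C_4$ on $abcd$ in a triangle‑free graph has $d(a)+d(b)\le n$ and $d(c)+d(d)\le n$, and at any vertex $v$ of it exactly $d(v)-2$ vertices outside the cycle are adjacent to $v$ (its two cycle‑neighbours are the only cycle vertices in $N(v)$), so $\cN(C_4',G)=\sum_{C_4}\big(\sum_{v\in V(C_4)}d(v)-8\big)\le(2n-8)\cN(C_4,G)$, with both inequalities tight exactly for the $T_2(n)$ pattern. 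The $C_5$ term is then incorporated as in the $P_5$ case: $C_5$ contains no $C_4$ and no $C_4'$, so its copies are again paid for only through the global degree/codegree data, which near‑extremality in $C_4$'s does not allow.

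The main obstacle is exactly this ``absorption'' step — proving that a triangle‑free graph with $t$ copies of $C_5$ has at least $t$ fewer copies of $P_5$ (respectively of $C_4'$) than $T_2(n)$. Purely local, termwise comparisons lose constant factors: since each $C_5$ contains five copies of $P_5$ and each $P_5$ lies in at most one $C_5$, the crude bound only yields $\cN(P_5,G)+\cN(C_5,G)\le\tfrac65\cN(P_5,T_2(n))$. One therefore has to combine the local counting with a global stability statement for $\ex(n,P_5,K_3)$ and $\ex(n,C_4',K_3)$ — the kind of generalised‑Turán stability the paper notes is in short supply — or argue directly that the degree and codegree profile forcing the $P_5$‑ (respectively $C_4'$‑) count near its maximum leaves no room for the between‑neighbourhood edges that create $C_5$'s. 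Any such argument must be tight at $n=5$, where a single $C_5$ already attains $\cN(P_5,G)+\cN(C_5,G)=\cN(P_5,T_2(n))$.
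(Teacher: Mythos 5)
Your reductions of the middle equalities and of the lower bound to the observation that $T_2(n)$ is bipartite (hence has no $C_5$) are of course correct, and your diagnosis of the crude termwise bound is also right: using only ``each $C_5$ contains five $P_5$'s and each $P_5$ lies in at most one $C_5$'' gives at best $\cN(P_5,G)+\cN(C_5,G)\le\tfrac65\cN(P_5,T_2(n))$, which is not enough. But this is precisely where your proposal stops: you name the ``absorption'' step — proving that the $C_5$'s are paid for by a deficit in the $P_5$- (resp.\ $C_4'$-) count — as the main obstacle, propose either a stability statement for $\ex(n,P_5,K_3)$ (which you note is not available) or an unspecified degree/codegree argument, and do not carry out either. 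As written this is an outline with the key step missing, not a proof.

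The paper closes this gap by a completely different and more elementary reduction, which does not reprise Győri--Pach--Simonovits at all and needs no stability. It counts copies of the five-vertex graph $M$ (two independent edges plus an isolated vertex), using Proposition~\ref{ujjj} proved immediately before: $\ex(n,M,K_3)=\cN(M,T_2(n))$. Each $P_5$, $C_4'$ and $C_5$ is obtained by picking a copy of $M$ and extending it. In a triangle-free $G$, if a copy of $M$ extends to a $C_5$ then that $C_5$ is unique and the five vertices induce \emph{exactly} a $C_5$ (any chord would create a triangle); so such an $M$ lies in a bounded number of $P_5$'s and in \emph{no} $C_4'$ at all. If $M$ does not extend to a $C_5$, it lies in a bounded number of $P_5$'s and $C_4'$'s. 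Splitting $\cN(M,G)=x+y$ into the copies that do and do not extend to $C_5$, accounting for the overcounting ($5$ copies of $M$ per $C_5$, etc.), and comparing with the per-$M$ counts in $T_2(n)$ gives the inequality directly, with the $C_5$'s absorbed because every $M$ that creates a $C_5$ is removed from the pool that can create a $C_4'$. This aggregation at the level of $M$-copies is the idea your proposal is missing; the rest of your sketch (central sub-path decomposition, the $C_4'$-to-$C_4$ reduction) is a plausible alternative route but would still require you to actually prove the absorption inequality, and you have not done so.
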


\begin{proof}

Observe first that each of $P_5$, $C_4'$ and $C_5$ can be built such a way that we pick an $M$ and extend it by adding further edges. We will count how many ways we can add edges to a given copy of $M$ in order to obtain a copy of $H$. We will refer to this as building an $H$ from $M$. Analogously, they can be built from $M'$.

Simple case analysis shows that in a triangle-free graph $G$, there is at most one way to build a $C_5$ from $M$ and 
at most 4 ways to build a $C_4'$.

On the other hand, we counted 
every copy of $C_4'$ four times and every copy of $C_5$ 5 times.
In particular, in $T_2(n)$, for every $M$ we find 
4 copies of $C_4'$ and count them 4 times, thus $\cN(M,T_2(n))=\cN(C_4',T_2(n))$.

Let $x$ be the number of copies of $M$ that extend to $C_5$ in $G$, and $y$ be the number of other copies. Then $x+y$ is the number of copies of $M$ in $G$, thus $x+y\le \cN(M,T_2(n))$ using Proposition \ref{ujjj}. Therefore, the number of copies of $C_5$ and $C_4'$ in $G$ is at most $x/5+y\le x+y\le \cN(M,T_2(n))=\cN(C_4',T_2(n))$, completing the proof of the first statement. 

For the second statement, we use a similar argument but with $M'$ in place of $M$. Simple case analysis shows that in a triangle-free graph $G$, there is at most one way to build a $C_5$ from $M'$ and at most 2 ways to build a $P_5$. A copy of $P_5$ contains two copies of $M'$.

On the other hand, we counted 
every copy of $P_5$ 3 times and every copy of $C_5$ 5 times.
In particular, in $T_2(n)$, for every $M'$ we find 2 copies of $P_5$ and count every copy of $P_5$ two times, thus $\cN(M',T_2(n))=\cN(P_5,T_2(n))$.

Let $x'$ be the number of copies of $M'$ that extend to $C_5$ in $G$, and $y$ be the number of other copies. Then $x'+y'$ is the number of copies of $M'$ in $G$, thus $x'+y'\le \cN(M',T_2(n))$ using Proposition \ref{ujjj}. Therefore, the number of copies of $C_5$ and $C_4'$ in $G$ is at most $x'/5+2x'/3+y'\le x'+y'\le \cN(M',T_2(n))=\cN(C_4',T_2(n))$, completing the proof. 
\end{proof}

Let us continue with $K_{2,3}$. Gy\H ori, Pach and Simonovits \cite{gypl} showed $\ex(n,K_{2,3},K_3)=\cN(K_{2,3},T_2(n))$.
We have already mentioned a result of Andr\'asfai, Erd\H os and S\'os \cite{aes}, a special case of which states that a triangle-free graph that is not bipartite has a vertex of degree at most $2n/5$. We will also use a special case of a theorem of Brouwer \cite{br} that states  that in a triangle-free graph on at least 5 vertices, there are at most $|E(T_2(n-2))|-\lfloor n/2\rfloor+1$ edges.

We will use the following simple statement  multiple times.

\begin{lemma}\label{lemmik}
    Let $vv'$ be an edge of an $n$-vertex triangle-free graph $G$ and assume that by deleting $v$ and $v'$ from $G$ we obtain a bipartite graph $G'$. Then the minimum degree $d$ in $G$ is less than $(n+2)/3$.
\end{lemma}

\begin{proof}
    If $v$ is connected to a vertex $a\in A$, observe that all the other neighbors of $a$ are in $B$, and then $v$ cannot be connected to any of those at least $d-1$ vertices. If $v$ is also connected to a $b\in B$, then $v$ is not connected to the at least $d-1$ other neighbors of $b$ either. As those vertices are in $A$, it means the degree of $v$ is at most $n-1-(2d-2)=n-2d+1$, thus $d\le n-2d+1$ and we are done. 
    
    Thus $v$ cannot be connected to vertices both in $A$ and $B$, and the same holds for $v'$. That means $G$ is bipartite, unless $v$ and $v'$ both have neighbors in the same part, say $A$. As they do not have common neighbors, this implies that $|A|\ge 2d$, thus $|B|\le n-2d$. Vertices in $A$ are connected only to vertices in $B$ and at most one of $v$ and $v'$, thus have degree at most $|B|+1$, showing that $d\le n-2d+1$ and completing the proof.
\end{proof}

\begin{proposition}\label{k23}
$\ex(n,(K_{2,3},C_5),K_3)=\cN(K_{2,3},C_5;T_2(n))$.
\end{proposition}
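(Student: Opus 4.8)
The plan is to imitate the structure of the preceding proposition, building a copy of $K_{2,3}$ out of a copy of $M$ (two independent edges plus an isolated vertex) and controlling the overcounting. First I would fix a triangle-free graph $G$ on $n$ vertices and observe that every copy of $K_{2,3}$ in $G$ contains several copies of $M$, and conversely a given copy of $M$ extends to $K_{2,3}$ in a bounded number of ways in a triangle-free graph. The key quantitative input is Proposition~\ref{ujjj}, giving $\cN(M,G)\le \cN(M,T_2(n))$, together with the Gy\H ori--Pach--Simonovits bounds $\ex(n,K_{2,3},K_3)=\cN(K_{2,3},T_2(n))$ and $\ex(n,P_\ell,K_3)=\cN(P_\ell,T_2(n))$. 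As in the proof of the $C_5$ version, I would split the copies of $M$ in $G$ according to whether they extend to a $C_5$ or not: let $x$ count those that do and $y$ those that do not, so $x+y=\cN(M,G)\le \cN(M,T_2(n))$. A copy of $M$ that does not extend to a $C_5$ lies in a subgraph of $K_{2,3}$, which bounds how many $K_{2,3}$'s it can be part of; a copy that does extend to $C_5$ contributes one $C_5$ but its contribution to the $K_{2,3}$ count is again bounded.

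The main bookkeeping step is to get the constants right. In $T_2(n)$ one should compute, for a fixed copy of $M$ sitting inside the bipartite structure, exactly how many copies of $K_{2,3}$ and how many copies of $C_5$ it lies in, and how many times each copy of $K_{2,3}$ (resp.\ $C_5$) is counted over all choices of $M$ inside it — this is the analogue of the ``$4$ copies, counted $4$ times'' identity in the previous proof, and it yields $\cN(M,T_2(n))=\cN(K_{2,3},T_2(n))$ (a fact already implicit in the Gy\H ori--Pach--Simonovits extremal graph being $T_2(n)$ for both). Then in a general triangle-free $G$ I would show $\cN(K_{2,3},G)+\cN(C_5,G)\le \alpha x+y$ for the appropriate constant $\alpha\le 1$ coming from the overcounting: each $C_5$ is counted five times when summing over its $M$-subcopies, and the $K_{2,3}$'s built from $C_5$-extendable $M$'s have to be absorbed into the same budget. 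Since $\alpha x+y\le x+y=\cN(M,G)\le \cN(M,T_2(n))=\cN(K_{2,3},T_2(n))=\cN(K_{2,3},C_5;T_2(n))$, the upper bound follows, and the matching lower bound is just exhibiting $T_2(n)$ (which is $C_5$-free, so $\cN(C_5,T_2(n))=0$ and $\cN(K_{2,3},C_5;T_2(n))=\cN(K_{2,3},T_2(n))$).

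The step I expect to be the main obstacle is verifying that the constant works out, i.e.\ that the contribution of the $C_5$-extendable copies of $M$ to $\cN(K_{2,3},G)$ is small enough to fit under the $x+y$ budget. Unlike the $C_4'$ and $P_5$ cases, where $M$ sits inside a $K_{2,3}$ and the count is a clean small integer, a copy of $M$ whose four vertices are the ``outer'' vertices of a $C_5$ can also be completed to a $K_{2,3}$ in ways that use vertices outside that $C_5$, so I cannot simply say ``at most $4$ copies of $K_{2,3}$ per $M$''. The resolution is likely to classify the copies of $K_{2,3}$ according to which of their $M$-subcopies extend to $C_5$: a copy of $K_{2,3}$ has many copies of $M$ inside it, and if $G$ is triangle-free some of these necessarily do \emph{not} extend to a $C_5$ (because inside $K_{2,3}$ there is no $C_5$), so each $K_{2,3}$ in $G$ is still counted by the $y$-term enough times. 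Making this precise — counting, inside a fixed copy of $K_{2,3}$, how many of its $M$-subcopies fail to extend to a $C_5$ in $G$, uniformly in $G$ — is the crux, and I would handle it by a short case analysis on the position of the two edges of $M$ within the $K_{2,3}$.

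If that absorption argument is too delicate to push through with a single clean constant, the fallback is a two-regime argument: if $\cN(C_5,G)$ is small relative to $n^5$ then $\cN(K_{2,3},G)\le\cN(K_{2,3},T_2(n))$ handles everything with slack, and if $\cN(C_5,G)$ is comparable to $n^5$ then $G$ is structurally close to the blow-up of the pentagon (by the pentagon stability results cited in Section~4), on which $\cN(K_{2,3},\cdot)$ is easily bounded below $\cN(K_{2,3},T_2(n))$ by a constant factor; but I expect the direct $M$-counting argument to be cleaner and to give the result for \emph{all} $n$ rather than just $n$ large, matching the style of Proposition~\ref{k23}'s companion statements.
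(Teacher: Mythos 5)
Your plan hinges on the identity $\cN(M,T_2(n))=\cN(K_{2,3},T_2(n))$, but that identity is false. Every copy of $K_{2,3}$ contains exactly $6$ copies of $M$ (choose two of the six edges that are disjoint; there are six such pairs), and conversely every copy of $M$ in $T_2(n)$ has its five vertices split $2$--$3$ across the parts and hence spans exactly one $K_{2,3}$; so in fact $\cN(M,T_2(n))=6\,\cN(K_{2,3},T_2(n))$. For $C_4'$ and $P_5$ the analogous ratios happen to be $4$ and $4$ (not $6$), and since each non-$C_5$-extending $M$ lies in at most $4$ copies of $C_4'$ (resp.\ $P_5$), the ratios cancel and one gets $\cN(C_4',G)\le y$. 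For $K_{2,3}$, however, each non-$C_5$-extending $M$ lies in at most \emph{one} $K_{2,3}$, so the best one can extract from $x+y=\cN(M,G)\le\cN(M,T_2(n))$ is
\[
\cN(K_{2,3},G)+\cN(C_5,G)\;\le\;\frac{y}{6}+\frac{x}{5}\;\le\;\frac{x+y}{5}\;\le\;\frac{\cN(M,T_2(n))}{5}\;=\;\frac{6}{5}\,\cN(K_{2,3},T_2(n)),
\]
which is off by a factor $6/5$. The difficulty is not, as you suggest, that a $C_5$-extendable $M$ might complete to a $K_{2,3}$ using outside vertices --- an $M$ and any $K_{2,3}$ (or $C_5$) containing it occupy the \emph{same} five vertices, so no outside vertices are ever involved, and in a triangle-free graph a five-vertex set spans at most one of $K_{2,3}$ or $C_5$ and never both. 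The real obstruction is purely the $6$ vs.\ $5$ disparity in the overcounting coefficients, and no re-weighting of the $x/y$ split fixes it.

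The paper therefore abandons the $M$-counting template for $K_{2,3}$ and instead argues by induction on $n$: if $G$ is not bipartite, the Andr\'asfai--Erd\H os--S\'os theorem gives a vertex $v$ of degree at most $2n/5$; one deletes $v$, applies the inductive hypothesis to $G-v$, and compares the number of copies of $K_{2,3}$ and $C_5$ through $v$ in $G$ with the number of copies of $K_{2,3}$ through a vertex of the larger part of $T_2(n)$, via a careful two-edges-plus-a-vertex count weighted by how often each $K_{2,3}$ or $C_5$ is hit. This vertex-deletion scheme is what absorbs the unfavourable constant that your direct count cannot. Your stability fallback could plausibly be made to work for large $n$, but it would not give the result for all $n$ and is not what the paper does.
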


\begin{proof} We apply induction on $n$. The base cases $n\le 5$ are trivial. The base case $n=6$ is simple: if a 6-vertex triangle-free graph contains $K_{2,3}$, then it cannot contain $C_5$, since there are no further edges among these vertices and the last vertex cannot be connected to vertices in both parts of the $K_{2,3}$. Therefore, $\ex(6,(K_{2,3},C_5),K_3)\le \max\{\ex(6,K_{2,3},K_3),\ex(6,C_5,K_3)\}=6$, since it is not hard to see that $\ex(6,C_5,K_3)=2$. Assume $n\ge 7$.

Let $G$ be a $K_3$-free graph. 
If $G$ is bipartite, we are done. Otherwise $G$ the minimum degree $d$ in $G$ is at most $2n/5$. Let us delete a vertex $v$ of degree $d$ and let $G'$ be the graph obtained this way. By induction $G'$ contains at most $\cN(K_{2,3},T_2(n-1))$ copies of $C_5$ or $K_{2,3}$.

Let $x$ denote the number of copies of $K_{2,3}$ and $C_5$ in $G$ containing $v$. Let $w$ be a vertex from the larger part of $T_2(n)$ and $y$ denote the number of copies of $K_{2,3}$ in $T_2(n)$ that contain $w$. 

\begin{claim}
$x\le y$
\end{claim}

\begin{proof}
We can count the copies of $C_5$ and $K_{2,3}$ containing $v$ the following way. We pick an edge incident to $v$ (at most $d$ ways), then an independent edge (at most $|E(T_2(n-2))|$ ways) and a fifth vertex ($n-4$ ways). Then on these five vertices there are at most one copy of $K_{2,3}$ or $C_5$, as any edge added to $K_{2,3}$ or $C_5$ would create a triangle.

Every $K_{2,3}$ where $v$ is in the larger part is counted four times this way, and every $K_{2,3}$ where $v$ is in the smaller part is counted six times, while every $C_5$ is counted four times. Thus we have that 

\begin{equation}\label{equ1}
    4x\le d(n-4)|E(T_2(n-2))|\le \frac{2n}{5}(n-4)|E(T_2(n-2))|.\end{equation}

Let us consider now $T_2(n)$ and $w$.
Let $y_1$ denote the number of those copies of $K_{2,3}$ where $w$ is in the smaller part, and $y_2=y-y_1$. We can count the copies of $K_{2,3}$ just like in $G$. First, for $y_1$, we pick an incident edge $\lfloor n/2\rfloor$ ways, an independent edge exactly $|E(T_2(n-2))|$ ways and a fifth vertex $\lfloor (n-4)/2\rfloor$ ways. Thus we have $6y_1\ge \lfloor n/2\rfloor\lfloor (n-4)/2\rfloor|E(T_2(n-2))|\ge \frac{(n-1)(n-5)}{4(n-4)}(n-4)|E(T_2(n-2))|$. Similarly, we have $4y_2\ge \lfloor \frac{n}{2}\rfloor \lceil(n-4)/2\rceil|E(T_2(n-2))|\ge \frac{(n-1)(n-5)}{4(n-4)}(n-4)|E(T_2(n-2))|$.


Combining these inequalities, we obtain that $y_1+y_2\ge 5(n-1)(n-5)|E(T(n-2))|/48$. Combining this
with (\ref{equ1}), we are done if $24n(n-4)\le 25(n-1)(n-5)$, which holds when $n\ge 52$.

For smaller values of $n$, we modify the proof by using Brouwer's theorem, mentioned above. Let $vv'$ be the first edge we picked and $G'$ be the graph we obtain by deleting $v$ and $v'$ from $G$. Assume first none of the graphs $G'$ we obtain this way is bipartite. 
Then $|E(G')|\le |E_T(n-2)|-\lfloor n/2\rfloor+1$ edges. Then we can improve (\ref{equ1}) to $4x\le \frac{2n}{5}(n-4)(|E(T(n-2))|-\lfloor n/2\rfloor+1)$. One can check that this is smaller than our lower bound on $y_1+y_2$ if $n\ge 7$. 

Assume now that $G'$ is bipartite. Then $d<(n+2)/3$ by Lemma \ref{lemmik}. That modifies  (\ref{equ1}) to $4x\le (n+1)(n-4)|E(T(n-2))|/3$. One can check that this is smaller than our lower bound on $y_1+y_2$ if $n\ge 5$, completing the proof.
\end{proof}
The number of copies of $K_{2,3}$ and $C_5$ in $G$ is at most $x+\cN(K_{2,3},T_2(n-1))\le y+\cN(K_{2,3},T_2(n-1))=\cN(K_{2,3},T_2(n))$, finishing the proof.
\end{proof}

A similar proof deals with $M$ and $M'$.

\begin{proposition} We have
$\ex(n,(M,C_5),K_3)=\cN(M,C_5;T_2(n))$ and $\ex(n,(M',C_5),K_3)=\cN(M',C_5;T_2(n))$.
\end{proposition}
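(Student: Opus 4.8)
The plan is to mimic the proof of \pref{k23}, using induction on $n$ with the trivial base cases $n\le 5$, and to treat the two statements in parallel. For both, the key structural point established earlier (in the proof of the previous proposition) is that $C_5$, $P_5$ and $C_4'$ — and here also $M'$ — are all obtained by first picking a copy of $M$ and then extending it, while in a triangle-free graph a fixed copy of $M$ extends to at most one $C_5$. So if $G$ is $K_3$-free and bipartite we are done immediately by \pref{ujjj} together with the $P_\ell$-result of Gy\H ori--Pach--Simonovits: both $M$ and $M'$ (and $C_5$, which contributes nothing extra in the bipartite case since every $M$ extending to a $C_5$ also sits inside many $P_5$'s/$P_3$'s) are maximized by $T_2(n)$. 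Otherwise, by Andr\'asfai--Erd\H os--S\'os there is a vertex $v$ of degree at most $2n/5$; delete it, apply the induction hypothesis to $G'=G-v$, and it suffices to bound the number $x$ of copies of $M$ or $M'$ or $C_5$ through $v$ by the number $y$ of copies of $M$ (resp.\ $M'$) through a vertex $w$ in the larger part of $T_2(n)$.

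The heart of the argument is the inequality $x\le y$, proved exactly as the Claim in \pref{k23}. For the $(M,C_5)$ case: count structures through $v$ by picking an incident edge ($\le \lfloor 2n/5\rfloor$ ways), an independent edge ($\le |E(T_2(n-2))|$ ways), and a fifth vertex ($\le n-4$ ways); on five vertices carrying such a configuration one finds at most one $C_5$, and any copy of $M$ is counted a bounded number of times (with a worse multiplicity when $v$ is the isolated vertex of the $M$ — one should take the worst case). This gives an upper bound on $x$ of the shape $c\cdot \tfrac{2n}{5}(n-4)|E(T_2(n-2))|$. In $T_2(n)$, counting copies of $M$ through $w$ the same way gives a matching lower bound on $y$ of the shape $c'\cdot\lfloor n/2\rfloor (n-4)|E(T_2(n-2))|$ (splitting, as in \pref{k23}, according to which part of the $M$ the vertex $w$ plays, since the multiplicities differ). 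Since $\lfloor n/2\rfloor$ beats $\tfrac{2n}{5}$ comfortably, the constants work out for $n$ at least some small threshold; one checks the finitely many remaining small $n$ by hand or absorbs them into the base cases. The $(M',C_5)$ case is identical: $M'$ is $P_3$ plus an independent edge, so one picks the independent edge and then a copy of $P_3$ (using $\ex(n,P_3,K_3)=\cN(P_3,T_2(n))$), again with a clean multiplicity count, and the same comparison goes through.

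Given $x\le y$, the number of copies of $M$ (resp.\ $M'$) and $C_5$ in $G$ is at most $x+\cN(M,T_2(n-1))\le y+\cN(M,T_2(n-1))=\cN(M,T_2(n))$ (resp.\ with $M'$), and since every $C_5$ in $T_2(n)$ is $0$ while $\cN(M,C_5;T_2(n))=\cN(M,T_2(n))$, this is exactly the desired bound, completing the induction. The main obstacle is purely bookkeeping: getting the multiplicity constants in the $x\le y$ comparison right, in particular being careful about the degenerate role of the isolated vertex in $M$ and the endpoint-versus-centre distinction in $M'$, so that the final numerical inequality between the constants holds for all $n\ge 6$ (or, failing that, pushing the base case up to whatever small threshold is needed). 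Nothing here is deep; it is the same calculation as in \pref{k23} with $K_{2,3}$ replaced by $M$ and $M'$, and the only genuine inputs beyond that proof are \pref{ujjj} and the $P_\ell$-theorem of Gy\H ori, Pach and Simonovits.
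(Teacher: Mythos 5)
Your overall skeleton is the same as the paper's: induction on $n$, trivial base cases, the bipartite case handled by Proposition~\ref{ujjj} (plus the observation that a bipartite graph has no $C_5$), and in the non-bipartite case deleting a vertex $v$ of degree at most $2n/5$ guaranteed by Andr\'asfai--Erd\H os--S\'os, then comparing the contribution of $v$ against the contribution of a vertex $w$ of the larger class of $T_2(n)$ via the triple count (edge at $v$, disjoint edge, fifth vertex), exactly as in the proof of Proposition~\ref{k23}.

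The genuine gap is in the sentence claiming that ``since $\lfloor n/2\rfloor$ beats $2n/5$ comfortably, the constants work out for $n$ at least some small threshold''. That is not what happens here. The triple count bounds the number of copies of $M$ with $v$ non-isolated by $\lfloor 2n/5\rfloor(n-4)|E(T_2(n-2))|$, and each $C_5$ through $v$ is counted exactly four times, so one only gets $x\le \tfrac{5}{4}\lfloor 2n/5\rfloor(n-4)|E(T_2(n-2))|$, while the target is $y=\lfloor n/2\rfloor(n-4)|E(T_2(n-2))|$. For even $n$ we have $\tfrac{5}{4}\cdot\tfrac{2n}{5}=\tfrac n2=\lfloor n/2\rfloor$, so the comparison goes through (just barely, with equality). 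For odd $n$, $\tfrac n2>\lfloor n/2\rfloor$, and the simple count fails for infinitely many $n$, not finitely many small ones; there is no slack to absorb. The paper therefore runs a second, structurally different argument for odd $n$: after fixing the edge $vv'$, it splits on whether $G-\{v,v'\}$ is bipartite, and either invokes Brouwer's exact result to gain an extra $(n-1)/2$ edges of savings, or derives a contradiction with non-bipartiteness when the minimum degree is at least $(n+2)/3$, or replaces the degree bound $2n/5$ by $\lfloor(n+1)/3\rfloor$ otherwise. None of this is bookkeeping; it is the real content of the proposition and your plan does not supply it.

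A smaller issue: your $x$ is defined as all copies of $M$ (or $M'$) and $C_5$ through $v$, but the triple count starting from an edge incident to $v$ only reaches copies of $M$ in which $v$ lies on an edge, not copies in which $v$ is the isolated vertex. Those have to be controlled separately (e.g.\ by $\cN(M,G-v)\le\cN(M,T_2(n-1))$, which gives the needed bound on the number of disjoint edge pairs in $G-v$); alternatively one restricts $x$ to the non-isolated case as the paper does, but then the final line $x+\cN(M,T_2(n-1))\le y+\cN(M,T_2(n-1))=\cN(M,T_2(n))$ needs the isolated-$v$ and isolated-$w$ contributions accounted for explicitly. Either way, the accounting needs one more comparison than you write down.
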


\begin{proof}
Just as in the proof of Proposition \ref{k23}, we use induction on $n$, and the base cases $n\le 5$ are trivial. Let $G$ be an $n$-vertex triangle-free graph. If $G$ is bipartite, we are done by Proposition \ref{ujjj}.

Thus we can assume that in $G$ the minimum degree is $d\le 2n/5$, let $v$ be a vertex of degree $d$. Let $x$ denote the number of copies of $M$ where $v$ is one of the non-isolated vertices, plus the number of copies of $C_5$ containing $v$ in $G$. Let $x'$ denote the number of copies of $M'$ containing $v$ plus the number of copies of $C_5$ containing $v$ in $G$.
Let $w$ be a vertex of the larger part of $T_2(n)$ and $y$ be the number of copies of $M$ containing an edge incident to $w$ in $T_2(n)$. Let $y'$ denote the number of copies of $M'$ containing $w$ in $T_2(n)$.

\begin{claim}
We have $x\le y$ and $x'\le y'$. 
\end{claim}

\begin{proof}
First we show a simple argument that works in the case $n$ is even, and afterwards we show how to improve it for the missing case $n$ is odd. We pick $M$ such that $v$ is not the isolated vertex at most $d(n-4)|E(T_2(n-2))|$ ways by picking an edge incident to $v$, then an edge on the remaining $n-2$ vertices, and then a fifth vertex. There is at most one $C_5$ containing that $M$, and each $C_5$ containing $v$ is counted this way exactly four times. Thus we have that the number of copies of $C_5$ containing $v$ is at most $d(n-4)|E(T_2(n-2))|/4$, thus $x\le 5d(n-4)|E(T_2(n-2))|/4\le \frac{n}{2}(n-4)|E(T_2(n-2))|$. On the other hand, the same calculation in the Tur\'an graph yields  $y= \lfloor\frac{n}{2}\rfloor(n-4)|E(T_2(n-2))|$. This finishes the proof of the first statement if $n$ is even.

Similarly, we pick $M'$ by picking an edge incident to $v$, an independent edge, a fifth vertex, and finally connect the fifth vertex to an endpoint of one of the two edges picked earlier. There are at most two ways to pick that last edge because of the triangle-free property, thus we pick $M'$ at most $2d(n-4)|E(T_2(n-2))|$ ways. There is at most one $C_5$ containing that $M'$, and we count every $C_5$ five times. Thus we have that the number of copies of $C_5$ containing $v$ is at most $2d(n-4)|E(T_2(n-2))|/5$, thus $x'\le 12d(n-4)|E(T_2(n-2))|/5\le\frac{24n}{25}(n-4)|E(T_2(n-2))|$. On the other hand, the same calculation in the Tur\'an graph yields  $y'= 2\lfloor\frac{n}{2}\rfloor(n-4)|E(T_2(n-2))|$. This finishes the proof of the second statement if $n$ is even or $n\ge 25$.

Now we show how to improve the above bound on $x$. The improvement would work for even $n$, but for simplicity assume $n$ is odd. After we pick the first edge $vv'$ when picking a copy of $M$ or $M'$, let $G'$ be the graph on the remaining vertices and assume first $G'$ is not bipartite. Then the theorem of Brouwer \cite{br} we have mentioned earlier shows that $|E(G')|\le |E(T_2(n-2))|-\lfloor n/2\rfloor+1$. Using this in the calculation decreases the upper bound on $x$ by $\frac{n}{2}(n-4)(\lfloor n/2\rfloor-1)$, thus it becomes smaller than $y$. Similarly, the upper bound on $x'$ becomes smaller than $y'$.

Assume now $G'$ is bipartite and recall that $d$ is the smallest degree in $G$. 
Then $d<(n+2)/3$ by Lemma \ref{lemmik}. Therefore, 
the calculations in the first and second paragraph of this proof give the bound $x\le \frac{5}{4}\lfloor\frac{n+1}{3}\rfloor(n-4)|E(T_2(n-2))|$. We have $\frac{5}{4}\lfloor\frac{n+1}{3}\rfloor\le (n-1)/2$ if $n\ge 11$, and $\frac{5}{4}\lfloor \frac{n+1}{3}\rfloor\le (n-1)/2$ in the cases $n=7$ and $n=9$. Thus we have $x\le \lfloor\frac{n}{2}\rfloor(n-4)|E(T_2(n-2))|=y$.

Similarly, we have $x'\le 2\frac{n+1}{3}(n-4)|E(T_2(n-2))|\le 2\lfloor\frac{n}{2}\rfloor(n-4)|E(T_2(n-2))|=y'$ if $n\ge 5$, finishing the proof.
\end{proof}
The number of copies of $M$ and $C_5$ in $G$ is at most $x+\cN(M,T_2(n-1))\le y+\cN(M,T_2(n-1))=\cN(M,T_2(n))$, and similarly the number of copies of $M'$ and $C_5$ in $G$ is at most $x+\cN(M',T_2(n-1))\le y+\cN(M',T_2(n-1))=\cN(M',T_2(n))$, finishing the proof.
\end{proof}

\begin{corollary} 
Let $k\ge 2$ and $\cT$ be a $k$-tuple consisting of graphs $M$, $M'$, $C_4'$, $P_5$, $K_{2,3}$ and $C_5$. Then $\ex(n,\cT,K_3)=\cN(\cT;T_2(n))$.
\end{corollary}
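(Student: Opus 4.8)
The plan is to deduce the corollary directly from the propositions already proved that evaluate $\ex(n,(H,C_5),K_3)=\cN(H,C_5;T_2(n))$ for each $H\in\{M,M',C_4',P_5,K_{2,3}\}$ (Proposition~\ref{k23} and the neighbouring propositions on $(P_5,C_5)$, $(C_4',C_5)$, $(M,C_5)$ and $(M',C_5)$), together with the single extra observation that $T_2(n)$ is bipartite and hence contains no copy of the odd cycle $C_5$. Consequently $\cN(\cT;T_2(n))=\sum_{i:\,H_i\neq C_5}\cN(H_i,T_2(n))$, and each quantity $\cN(H,C_5;T_2(n))$ occurring in one of those pair results is simply $\cN(H,T_2(n))$.

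First I would record the uniform inequality: for every triangle-free graph $G$ on $n$ vertices and every $H\in\{M,M',C_4',P_5,K_{2,3}\}$,
\[
\cN(H,G)+\cN(C_5,G)\le \cN(H,T_2(n)),
\]
which is exactly the content of the corresponding pair proposition combined with the observation above. (In particular, using $\cN(C_5,G)\ge 0$ this also recovers the individual bounds $\cN(H,G)\le\cN(H,T_2(n))$, so nothing beyond the excerpt is needed.)

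Then, writing $\cT=(H_1,\dots,H_k)$ with each $H_i$ one of the six listed graphs, let $a$ be the number of indices $i$ with $H_i=C_5$ and let $H^{(1)},\dots,H^{(b)}$ be the remaining entries, so $a+b=k$. For a triangle-free $G$ on $n$ vertices, summing the displayed inequality over $j=1,\dots,b$ and then adding $a\,\cN(C_5,G)$ gives
\[
\cN(\cT;G)=a\,\cN(C_5,G)+\sum_{j=1}^{b}\cN(H^{(j)},G)\le \cN(\cT;T_2(n))+(a-b)\,\cN(C_5,G),
\]
where I used $\sum_{j=1}^{b}\cN(H^{(j)},T_2(n))=\cN(\cT;T_2(n))$. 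If the $H_i$ are distinct graphs from the six-element list, then $a\le 1$ while $b\ge k-1\ge 1$, so $a\le b$, the last term is nonpositive, and $\cN(\cT;G)\le\cN(\cT;T_2(n))$. Since $T_2(n)$ is itself triangle-free, this proves $\ex(n,\cT,K_3)=\cN(\cT;T_2(n))$.

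There is no deep obstacle here; the one point I would make explicit is a bookkeeping one: the argument needs $C_5$ to occur among the $H_i$ at most as often as the other listed graphs combined (that is, $a\le b$), which holds under the natural reading that the entries of $\cT$ are distinct. This hypothesis cannot simply be dropped --- a tuple built only from copies of $C_5$ is beaten by the balanced blow-up of the pentagon, which has $\Theta(n^5)$ copies of $C_5$ whereas $T_2(n)$ has none. If one instead wanted a self-contained argument, one could run the inductions from Proposition~\ref{k23} and the $(M,C_5)$, $(M',C_5)$ propositions simultaneously for the whole tuple (delete a vertex of degree at most $2n/5$ provided by Andr\'asfai--Erd\H os--S\'os~\cite{aes} when $G$ is non-bipartite, and use the individual extremal results when $G$ is bipartite), but the combination above is shorter and uses only facts already in hand.
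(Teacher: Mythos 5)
Your proposal is correct and follows essentially the same route as the paper: deduce from the pair propositions both the individual bounds $\cN(H,G)\le\cN(H,T_2(n))$ for $H\in\{M,M',C_4',P_5,K_{2,3}\}$ (using $\cN(C_5,T_2(n))=0$ and $\cN(C_5,G)\ge 0$) and the joint bound for $(H,C_5)$, then assemble these over the tuple. Your explicit $a\le b$ bookkeeping usefully pins down the implicit hypothesis --- that the entries of $\cT$ are distinct, so $C_5$ occurs at most once --- which the paper's terser wording (``including $C_5$, if it is in $\cT$'') takes for granted.
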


\begin{proof}
We have proved the statement for $k=2$. Taking any $H$ of the graphs  $M$, $M'$, $C_4'$, $P_5$ together with $C_5$ shows that $T_2(n)$ contains the most copies of $H$ among triangle-free $n$-vertex graphs. Therefore, the case $k>2$ is also implied, as the copies of two elements of the $k$-tuple $\cT$ (including $C_5$, if it is in $\cT$) are maximized by $T_2(n)$, and any additional graph in the tuple is also maximized by $T_2(n)$.
\end{proof}

\end{document}